\theoremstyle{plain}
\newtheorem{theorem}{Theorem}[section]
\newtheorem{proposition}[theorem]{Proposition}
\newtheorem{corollary}[theorem]{Corollary}
\newtheorem{claim}[theorem]{Claim}
\theoremstyle{definition}
\newtheorem{definition}[theorem]{Definition}
\newtheorem{example}[theorem]{Example}
\theoremstyle{remark}
\newtheorem{remark}[theorem]{Remark}
\numberwithin{equation}{section}
\renewcommand{\eqref}[1]{(\ref{#1})}
\newcommand{\field}[1]{\mathbb{#1}}
\newcommand{\Z}{\field{Z}}
\newcommand{\K}{\field{K}}
\newcommand{\id}{{\mathrm{id}}}
\begin{document}
\title{Construction of Hopf algebroids}
\author{Yudai Otsuto\thanks{Department of Mathematics,
Faculty of Science, Hokkaido University, Sapporo 0600810, Japan; my.otsuto@math.sci.hokudai.ac.jp} 
\and Youichi Shibukawa\thanks{Department of Mathematics,
Faculty of Science, Hokkaido University, Sapporo 0600810, Japan; shibu@math.sci.hokudai.ac.jp}}
\date{}
\maketitle
\begin{abstract}
For arbitrary algebras $L$,
we construct Hopf algebroids $A_\sigma$ with base rings $L$
by means of $\sigma^{ab}_{cd}\in L$
satisfying suitable properties.
\footnote[0]{Keywords: Hopf algebroids.}
\footnote[0]{MSC2010: Primary 16T20, 20G42, 81R50;
Secondary 20N05, 20N10.}
\end{abstract}
\section{Introduction}
The quantum group \cite{drinfeld,jimbo}
has produced a revival of interest 
in the Hopf algebra \cite{abe,montgomery,sweedler},
and much attention is now directed to
its generalization \cite{lu,maltsiniotis,ravenel,xu}.

After pioneering works \cite{hayashi1,hayashi2}
by Hayashi about face algebras,
also called weak Hopf algebras,
there are two generalizations of the Hopf algebra:
one is the $\times_L$-Hopf algebra \cite{schauenburg}
and the other is the Hopf algebroid \cite{bohm2005,bohm2004},
on which we focus in this paper.

Motivated by
the FRT construction of
a q-analogue of the function space \cite{etingof,faddeev},
one of the author constructed Hopf algebroids
$A_\sigma$ \cite{kamiyas2018,s2016,stakeuchi}
by means of
dynamical Yang-Baxter maps $\sigma$
\cite{kamiyas,matsumoto,matsumotos,matsushimi,
rump,s2005,s2007,s20101,s20102}.
This $\sigma$ is a set-theoretic
solution
to a version of the quantum dynamical
Yang-Baxter equation \cite{felder,gervais},
a generalization of the quantum Yang-Baxter equation
\cite{baxter72,baxter78,yang67,yang68}.

Let
$\K$ be an arbitrary field,
$H$ a nonempty and finite set,
and let $M_H(\K)$ denote
the $\K$-algebra consisting of maps from the set $H$ to $\K$.
In the above construction of Hopf algebroids \cite{s2016},
the algebra $A_\sigma$ we first obtained
is nothing but a weak Hopf algebra,
because
the following two conditions are equivalent
\cite[Section 5]{schauenburg2003}
by taking account of the fact that
the Hopf algebroid is a $\times_L$-Hopf algebra
\cite{bohm2004}:
(1)
$A_\sigma$ is a weak Hopf algebra;
(2)
the base ring $M_H(\K)$ of the Hopf algebroid $A_\sigma$
is separable with an idempotent Frobenius system.
Here,
for a commutative ring $k$,
we say that a 
$k$-algebra $L$ is separable with an idempotent Frobenius system
\cite[Section 3]{schauenburg2003},
iff 
there exist a $k$-linear map $\phi: L\to k$
and
an element $e=\sum e^{(1)}\otimes e^{(2)}\in L\otimes_k L$
satisfying that
$l=\sum\phi(le^{(1)})e^{(2)}=\sum e^{(1)}\phi(e^{(2)}l)$
for any $l\in L$
and that
$\sum e^{(1)}e^{(2)}=1_L$.
For construction of the Hopf algebroid,
we
need an extra process \cite{s2016},
in which we change from the finite set $H$ to an infinite one.
This change implies that the base ring $M_H(\K)$
is not separable
with an idempotent Frobenius system.

The purpose of this paper is to give a simpler way to
construct 
Hopf algebroids
that are not weak Hopf algebras,
even though the set $H$ is finite.

To achieve this purpose, we
tried to generalize 
$M_H(\K)$ to the $\K$-algebra
$M_H(R)$
consisting of maps from the set $H$ to
an arbitrary $\K$-algebra $R$ \cite{otsuto}.
If $R$ is not separable with
an idempotent Frobenius system,
then so is the algebra $M_H(R)$,
and 
the Hopf algebroid $A_\sigma$ 
with $M_H(R)$ is not a weak Hopf algebra
as a result
(See also Section 7).

The aim of this paper is to generalize the 
algebra $M_H(R)$ to an arbitrary algebra $L$
in this construction;
and we will consequently clarify properties of $\sigma$
that can
produce the Hopf algebroid $A_\sigma$.

The organization of this paper is as follows.
Section 2 presents a left bialgebroid $A_\sigma$.
This algebra $A_\sigma$ is also a right bialgebroid,
which is proved in Section 3.
In Section 4, we construct an anti-automorphism $S$ on 
the algebra $A_\sigma$,
which implies that $A_\sigma$ is a Hopf algebroid in Section 5.
Section 6 deals with a sufficient condition for
the existence of the anti-automorphism $S$ on $A_\sigma$.
In Section 7, we provide with examples of Hopf algebroids
$A_\sigma$
by means of the sufficient condition in Section 6.
\section{Left bialgebroid $A_\sigma$}
In this section, we introduce a left bialgebroid
$A_\sigma$, which is a main subject of this paper.

Let $A$ and $L$ be associative unital rings,
and let
$s_L: L\to A$
and $t_L: L^{op}\to A$ be ring homomorphisms
satisfying
\begin{equation}\label{eq:leftbi:commute}
s_L(l)t_L(l')=t_L(l')s_L(l)
\quad(\forall l, l'\in L).
\end{equation}
Here, $L^{op}$ is the opposite ring of $L$.
These two homomorphisms give an $L$-bimodule structure in $A$,
which is denoted by
${}_LA_L$,
through the following action:
\begin{equation}\label{eq:leftbi:action}
l\cdot a\cdot l'=s_L(l)t_L(l')a
\quad(l, l'\in L, a\in A).
\end{equation}

For $f\in L$,
the notations $\rho_l(f)$
and $\rho_r(f)$ respectively mean the left 
and the right multiplications
by $f$:
\begin{equation}\label{eq:leftbi:rholr}
\rho_l(f): L\ni g\mapsto fg\in L;
\quad\rho_r(f): L\ni g\mapsto gf\in L.
\end{equation}
These are elements of $\mathrm{End}_k(L)$
and make $L$ an $L$-bimodule:
\[
l\cdot f\cdot l'=\rho_l(l)\rho_r(l')f
\quad(l, l', f\in L).
\]
\begin{definition}\label{def:leftbi:leftbialgebroid}
Let $({}_LA_L, \Delta_L, \pi_L)$
be a comonoid in the tensor category of $L$-bimodules.
A sextuplet $A_L=(A, L, s_L, t_L, \Delta_L, \pi_L)$
is called a left bialgebroid,
iff
{\allowdisplaybreaks
\begin{align}
\label{eq:leftbi:Deltast}
&a_{(1)}t_L(l)\otimes a_{(2)}=a_{(1)}\otimes a_{(2)}s_L(l),
\\\label{eq:leftbi:Delta1A}
&\Delta_L(1_A)=1_A\otimes 1_A,
\\\label{eq:leftbi:Delta}
&\Delta_L(ab)=\Delta_L(a)\Delta_L(b),
\\\label{eq:leftbi:pi1A}
&\pi_L(1_A)=1_L,
\\\label{eq:leftbi:pist}
&\pi_L(as_L(\pi_L(b)))=\pi_L(ab)=\pi_L(at_L(\pi_L(b)))
\end{align}
}%
for any $l\in L$
and $a, b\in A$.
Here,
$1_A$ is the unit element of the ring $A$
and $1_L$ is that of the ring $L$.
We use Sweedler's notation in \eqref{eq:leftbi:Deltast}:
$\Delta_L(a)=a_{(1)}\otimes a_{(2)}\in A\otimes_LA$.
The right-hand-side of \eqref{eq:leftbi:Delta}
is well defined because of \eqref{eq:leftbi:Deltast}.
\end{definition}
The left bialgebroid is also called 
Takeuchi's $\times_L$-bialgebra \cite{takeuchi}.

From now on,
the symbols $k$ and $L$ respectively denote
a commutative ring with the unit $1_k$
and
a $k$-algebra with the unit $1_L$.
That is to say,
$L$ is a $k$-module with an associative multiplication that is
bilinear and has the unit element $1_L$.
The letter 
$G$ means a group,
and,
for any $\alpha\in G$,
let $T_\alpha: L\to L$ be
a $k$-algebra automorphism
satisfying 
\begin{equation}\label{eq:leftbi:translation}
T_\alpha\circ T_{\alpha^{-1}}=\id_L
\quad(\forall\alpha\in G).
\end{equation}
We will denote by $\deg$ a map from a finite set $X$ to
the group $G$.

Let 
$Gen$ denote the set
$(L\otimes_kL^{op})\coprod\{ L_{ab}: a, b\in X\}\coprod
\{ (L^{-1})_{ab}: a, b\in X\}$.
Here, 
$L_{ab}$
and $(L^{-1})_{ab}$ are indeterminates,
and
$L^{op}$ is the opposite algebra of $L$.
$\langle Gen\rangle$ means
the monoid consisting of all (finite) words including
the empty word $\emptyset$ whose set of alphabets is $Gen$.
The binary operation of this monoid is a concatenation of words.
Let $\sigma^{ab}_{cd}\in L$
$(a, b, c, d\in X)$
and
we denote by $I_\sigma$
the two-sided ideal of the free $k$-algebra 
$k\langle Gen\rangle=\oplus_{w\in\langle Gen\rangle}
kw$ whose generators are the following.
\begin{enumerate}
\item
$\xi+\xi'-(\xi+\xi')$,
$c\xi-(c\xi)$,
$\xi\xi'-(\xi\xi')$
$(\forall c\in k, \xi, \xi'\in L\otimes_k L^{op})$.

Here, the symbol $+$ in $\xi+\xi'$ means the addition in the algebra 
$k\langle Gen\rangle$,
while the symbol $+$ in $(\xi+\xi')(\in Gen)$ is the addition in the algebra
$L\otimes_k L^{op}$.
The notations of the scalar products and products in the other generators 
are similar.
\item
$\displaystyle
\sum_{c\in X}L_{ac}(L^{-1})_{cb}-\delta_{ab}\emptyset$,
$\displaystyle
\sum_{c\in X}(L^{-1})_{ac}L_{cb}-\delta_{ab}\emptyset$
$(\forall a, b\in X)$.

Here, $\delta_{ab}$ denotes  
Kronecker's delta symbol;
$\delta_{ab}=\begin{cases}1_k&\mbox{if $a=b$};\\0_k&\mbox{otherwise.}
\end{cases}$
\item
$(T_{\mathrm{deg}(a)}(f)\otimes 1_L)L_{ab}-L_{ab}(f\otimes 1_L)$,

$(1_L\otimes T_{\mathrm{deg}(b)}(f))L_{ab}-L_{ab}(1_L\otimes f)$,

$(f\otimes 1_L)(L^{-1})_{ab}-(L^{-1})_{ab}
(T_{\mathrm{deg}(b)}(f)\otimes 1_L)$,

$(1_L\otimes f)
(L^{-1})_{ab}-(L^{-1})_{ab}
(1_L\otimes T_{\mathrm{deg}(a)}(f))$
$(\forall f\in L(=L^{op}), a, b\in X)$.
\item 
$\sum_{x, y\in X}(\sigma^{xy}_{ac}\otimes1_L)L_{yd}L_{xb}
-
\sum_{x, y\in X}(1_L\otimes\sigma^{bd}_{xy})L_{cy}L_{ax}$
$(\forall a, b, c, d\in X)$.
\item
$\emptyset-1_L\otimes 1_L$.
\end{enumerate}

Let us denote by $A_\sigma$
the quotient
\begin{equation}\label{eq:leftbi:Asigma}
A_\sigma=k\langle Gen\rangle/I_\sigma.
\end{equation}
\begin{theorem}\label{thm:leftbi:leftbialg}
If
the elements $\sigma^{ab}_{cd}\in L$
satisfy
\begin{equation}\label{eq:leftbi:rhoT}
\rho_l(\sigma^{bd}_{ac})\circ T_{\deg(d)}\circ T_{\deg(b)}
=\rho_r(\sigma^{bd}_{ac})\circ T_{\deg(c)}\circ T_{\deg(a)}
\end{equation}
for any $a, b, c, d\in X$,
then
the quotient $A_\sigma$ $\eqref{eq:leftbi:Asigma}$
is a left bialgebroid.
\end{theorem}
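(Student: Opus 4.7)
The plan is to define the structure maps $(s_L, t_L, \Delta_L, \pi_L)$ on the generators of $A_\sigma$, verify that each descends to the quotient, and then check the axioms of Definition~\ref{def:leftbi:leftbialgebroid}. First I will set $s_L(l) := [l\otimes 1_L]$ and $t_L(l) := [1_L\otimes l]$ in $A_\sigma$: relations~(1) and~(5) in $I_\sigma$ ensure these are unital $k$-algebra homomorphisms from $L$ and $L^{op}$, and since $(l\otimes 1_L)(1_L\otimes l') = (1_L\otimes l')(l\otimes 1_L)$ already holds in $L\otimes_k L^{op}$, the commutativity \eqref{eq:leftbi:commute} is automatic and yields the $L$-bimodule structure \eqref{eq:leftbi:action}.

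Next I will define $\Delta_L$ on generators by $\Delta_L(l\otimes l') := s_L(l)\otimes_L t_L(l')$, $\Delta_L(L_{ab}) := \sum_{c\in X} L_{ac}\otimes_L L_{cb}$, and $\Delta_L((L^{-1})_{ab}) := \sum_{c\in X}(L^{-1})_{cb}\otimes_L (L^{-1})_{ac}$, and extend multiplicatively. The task is to check that each of the five families of generators of $I_\sigma$ maps to $0$ in $A_\sigma\otimes_L A_\sigma$. Families~(1) and~(5) are immediate; family~(2) falls out from the matrix-inverse calculation $\sum_c L_{pc}(L^{-1})_{cq} = \delta_{pq}$ which pairs up the two tensor factors; family~(3) follows from the corresponding relation applied in each tensor slot together with the fact that $T_\alpha$ is a $k$-algebra automorphism. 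The decisive check, and the place where hypothesis~\eqref{eq:leftbi:rhoT} is essential, is family~(4): applying $\Delta_L$ produces quadruple sums $\sum s_L(\sigma^{xy}_{ac})L_{yp}L_{xq}\otimes L_{pd}L_{qb}$ and $\sum L_{cp}L_{aq}\otimes t_L(\sigma^{bd}_{xy})L_{py}L_{qx}$ on the two sides; one invokes family~(4) itself inside each tensor slot to swap $s_L(\sigma)$ and $t_L(\sigma)$ factors, uses the tensor defining identity $t_L(l)a\otimes b = a\otimes s_L(l)b$ to move the $t_L$-factor across, and matches the two sides by reindexing. The reconciliation of the $T_{\deg(\cdot)}$ conjugations picked up when commuting $\sigma^{bd}_{ac}$ past the $L_{ij}$'s via family~(3) requires exactly the twisted-commutation $\sigma^{bd}_{ac}\,T_{\deg(d)}T_{\deg(b)}(f) = T_{\deg(c)}T_{\deg(a)}(f)\,\sigma^{bd}_{ac}$ provided by~\eqref{eq:leftbi:rhoT}.

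Third, I will define the counit by $\pi_L(l\otimes l') := ll'$ and $\pi_L(L_{ab}) := \pi_L((L^{-1})_{ab}) := \delta_{ab}1_L$, extended so that $\pi_L(xy) = \pi_L(x\,s_L(\pi_L(y)))$ for all $x,y$, which makes $\pi_L$ an $L$-bimodule map. Descent on $I_\sigma$ is routine; for family~(4) both sides collapse to $\sigma^{bd}_{ac}$ via the Kronecker deltas arising from $\pi_L(L_{ij}) = \delta_{ij}1_L$ and the $L$-bilinearity of $\pi_L$.

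With the four maps in hand, the axioms \eqref{eq:leftbi:Deltast}--\eqref{eq:leftbi:pist} follow by direct computation on the generators: coassociativity and counitality are term-by-term on $L_{ab}$, $(L^{-1})_{ab}$ and $l\otimes l'$. The Takeuchi condition \eqref{eq:leftbi:Deltast} for $L_{ab}$ reduces to $\sum_c t_L(T_{\deg(c)}(l))L_{ac}\otimes L_{cb} = \sum_c L_{ac}\otimes L_{cb}s_L(l)$, which holds by the tensor identity plus family~(3), and then extends to all of $A_\sigma$ because the Takeuchi subring $A_\sigma\times_L A_\sigma$ is closed under multiplication; the remaining axioms \eqref{eq:leftbi:Delta1A}, \eqref{eq:leftbi:Delta}, \eqref{eq:leftbi:pi1A}, and \eqref{eq:leftbi:pist} hold by construction of $\Delta_L$ and $\pi_L$. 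Thus the principal obstacle is the family~(4) verification for $\Delta_L$, which is the single place where \eqref{eq:leftbi:rhoT} is invoked.
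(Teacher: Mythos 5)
Your overall architecture (define $s_L,t_L,\Delta_L,\pi_L$ on generators, check descent through $I_\sigma$, verify the axioms) matches the paper's, but you have misplaced the one point where the hypothesis \eqref{eq:leftbi:rhoT} actually does work, and the step you dismiss as routine is exactly the step that fails without it. For the comultiplication, the descent of family (4) does \emph{not} need \eqref{eq:leftbi:rhoT}: applying $\overline{\Delta}$ to $\sum_{x,y}(\sigma^{xy}_{ac}\otimes 1_L)L_{yd}L_{xb}$ gives $\sum_{x,y,p,q}s_L(\sigma^{xy}_{ac})L_{yp}L_{xq}\otimes L_{pd}L_{qb}$; invoking relation (4) in the first slot (with lower indices $p,q$), moving the resulting $t_L(\sigma^{qp}_{xy})$ across the tensor sign, and invoking relation (4) once more in the second slot lands you, after reindexing, exactly on $\overline{\Delta}$ of the second sum --- no commutation of $\sigma$ past the $L_{ij}$'s via family (3) is ever required, so no $T_{\deg(\cdot)}$ conjugations need reconciling. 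This is consistent with the paper, whose Proposition \ref{prop:leftbi:rightideal} makes no use of \eqref{eq:leftbi:rhoT}; the hypothesis is invoked only in the counit proposition.

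The genuine gap is in your counit. Since $\pi_L$ is not an algebra map (it only satisfies \eqref{eq:leftbi:pist}), prescribing $\pi_L(L_{ab})=\delta_{ab}1_L$ and ``extending so that $\pi_L(xy)=\pi_L(x\,s_L(\pi_L(y)))$'' is not a well-posed definition on $k\langle Gen\rangle$, and, more importantly, showing that a generator $g$ of family (4) satisfies $\pi_L(g)=0$ (your ``both sides collapse to $\sigma^{bd}_{ac}$'') does not give $\pi_L(ugv)=0$ for arbitrary words $u,v$, which is what descent through the two-sided ideal $I_\sigma$ requires. Concretely, $\pi_L\bigl(g\cdot(f\otimes 1_L)\bigr)$ computes, via family (3) and the Kronecker deltas, to $\sigma^{bd}_{ac}\,T_{\deg(d)}T_{\deg(b)}(f)-T_{\deg(c)}T_{\deg(a)}(f)\,\sigma^{bd}_{ac}$, whose vanishing for all $f$ is precisely \eqref{eq:leftbi:rhoT}. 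The paper handles this by factoring $\pi_L$ through a genuine $k$-algebra homomorphism $\overline{\varepsilon}:k\langle Gen\rangle\to\mathrm{End}_k(L)$ with $\overline{\varepsilon}(L_{ab})=\delta_{ab}T_{\deg(a)}$ (not $\delta_{ab}\,\mathrm{id}$) and $\overline{\varepsilon}(l\otimes l')=\rho_l(l)\rho_r(l')$, then setting $\pi_L(a)=\varepsilon(a)(1_L)$; the operator identity $\overline{\varepsilon}(g)=0$ for family (4) is literally equation \eqref{eq:leftbi:rhoT}. You need this anchor-map construction (or an equivalent) both to make $\pi_L$ well defined at all and to see where the hypothesis of the theorem enters.
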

The maps $s_L: L\to A_\sigma$
and
$t_L: L^{op}\to A_\sigma$ are defined as follows:
\begin{align}
&s_L: L\ni l\mapsto l\otimes 1_L+I_\sigma\in A_\sigma;
\label{eq:leftbi:sL}\\\label{eq:leftbi:tL}
&t_L: L^{op}\ni l\mapsto 1_L\otimes l+I_\sigma\in A_\sigma.
\end{align}
These are $k$-algebra homomorphisms
satisfying \eqref{eq:leftbi:commute},
and $A_\sigma$ is consequently an $L$-bimodule
with the action \eqref{eq:leftbi:action}.

In order to define the map $\Delta_L$,
we need a $k$-algebra homomorphism
$\overline{\Delta}: k\langle Gen\rangle\to A_\sigma\otimes_kA_\sigma$
whose definition on the generators is as follows:
\begin{align}\nonumber
&\overline{\Delta}(\xi)=(s_L\otimes_kt_L)(\xi)
\quad(\xi\in L\otimes_kL^{op});
\\\nonumber
&\overline{\Delta}(L_{ab})=\sum_{c\in X}
L_{ac}+I_\sigma\otimes L_{cb}+I_\sigma
\quad(a, b\in X);
\\\label{eq:leftbi:DeltaL}
&\overline{\Delta}((L^{-1})_{ab})=\sum_{c\in X}
(L^{-1})_{cb}+I_\sigma\otimes(L^{-1})_{ac}+I_\sigma
\quad(a, b\in X).
\end{align}
We write $I_2$ for the right ideal of $A_\sigma\otimes_kA_\sigma$
whose generators are
$t_L(l)\otimes 1_{A_\sigma}-1_{A_\sigma}\otimes s_L(l)$
$(\forall l\in L)$.
\begin{proposition}\label{prop:leftbi:rightideal}
$I_2$ is a $k$-module.
If $a\in I_\sigma$ is any generator $(1)$--$(5)$
of two-sided ideal $I_\sigma$, then
$\overline{\Delta}(a)\in I_2$.
In addition,
$\overline{\Delta}(k\langle Gen\rangle)I_2\subset I_2$.
\end{proposition}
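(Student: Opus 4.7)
The plan is to verify the three assertions in order. Claim one is immediate: $I_2$ is by definition a right ideal of the unital $k$-algebra $A_\sigma\otimes_k A_\sigma$, and is therefore automatically a $k$-submodule via $c\cdot y=y\cdot(c\,1_{A_\sigma\otimes_k A_\sigma})$.

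For claim two, I would treat each of the five generator types of $I_\sigma$ separately, using the multiplicativity of $\overline{\Delta}$ together with the defining relations of $A_\sigma$. Generators of types (1) and (5) map to zero, because the restriction of $\overline{\Delta}$ to $L\otimes_k L^{op}$ coincides with the $k$-algebra homomorphism $s_L\otimes_k t_L$. Generators of type (2) also collapse in $A_\sigma\otimes_k A_\sigma$: after expanding $\overline{\Delta}(L_{ac})\overline{\Delta}((L^{-1})_{cb})$ as a double sum and summing over $c$, the inner sum telescopes via $\sum_c L_{ec}(L^{-1})_{cf}=\delta_{ef}$. Generators of type (3) vanish by a row-by-row application of the corresponding commutation relations of $A_\sigma$ inside one tensor slot at a time. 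The main obstacle is the type-(4) case: after expanding $\overline{\Delta}$ of both terms, I would apply relation (4) in the first tensor slot of the LHS to convert $s_L(\sigma^{xy}_{ac})$ into $t_L(\sigma^{fe}_{xy})$, then use the defining relation of $I_2$ in the form $t_L(l)a\otimes b\equiv a\otimes s_L(l)b\pmod{I_2}$ to transport this $t_L$ across the tensor, and finally apply relation (4) a second time in the second tensor slot of the RHS expansion; after relabeling summation indices the two sides then coincide modulo $I_2$.

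For claim three, since $\overline{\Delta}$ is a $k$-algebra homomorphism and $I_2$ is a right ideal, the set $\{x\in A_\sigma\otimes_k A_\sigma\mid xI_2\subset I_2\}$ is a $k$-subalgebra; it therefore suffices to verify $\overline{\Delta}(g)\cdot(t_L(l)\otimes 1-1\otimes s_L(l))\in I_2$ for each generator $g\in Gen$ and each $l\in L$. The case $g\in L\otimes_k L^{op}$ reduces via \eqref{eq:leftbi:commute} to the right-ideal property. For $g=L_{ab}$, expanding $\overline{\Delta}(L_{ab})=\sum_c L_{ac}\otimes L_{cb}$ and applying the type-(3) relations to push $t_L(l)$ past $L_{ac}$ and $s_L(l)$ past $L_{cb}$ produces the \emph{same} shifted element $T_{\deg(c)}(l)$ in both slots, so the $c$-th summand factors as a generator of $I_2$ right-multiplied by $L_{ac}\otimes L_{cb}$. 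The case $g=(L^{-1})_{ab}$ is analogous, using $T_\alpha\circ T_{\alpha^{-1}}=\id_L$ to invert the shift so that the common shift appearing in both slots is $T_{\deg(c)^{-1}}(l)$; the case $g=\emptyset$ is trivial. The essential technical point underlying both claims two and three is precisely this alignment of the $T_{\deg(c)}$'s on the two tensor slots via the shared summation index~$c$.
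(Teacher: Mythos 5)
Your proposal is correct and follows essentially the same route as the paper: the paper's only detailed computation is the third claim for $v=(L^{-1})_{ab}$, where it likewise uses the type-(3) relations and \eqref{eq:leftbi:translation} to produce the common shifted element $T_{\deg(c)^{-1}}(l)$ in both tensor slots and factor out a generator of $I_2$. You in fact supply more than the paper does (notably the type-(4) case of the second claim, with the two applications of relation (4) linked by the congruence $t_L(l)u\otimes v\equiv u\otimes s_L(l)v \pmod{I_2}$), and that argument checks out.
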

From this proposition,
$\overline{\Delta}(I_\sigma)\subset I_2$,
which induces a $k$-module homomorphism
$\widetilde{\Delta}: A_\sigma\to(A_\sigma\otimes_k
A_\sigma)/I_2$.
By taking account of the fact that
$(A_\sigma\otimes_kA_\sigma)/I_2\cong A_\sigma\otimes_LA_\sigma$
as $\Z$-modules,
this $\widetilde{\Delta}$
induces 
the $\Z$-module homomorphism
$\Delta_L: A_\sigma\to A_\sigma\otimes_LA_\sigma$,
which is also an $L$-bimodule homomorphism.
\begin{proof}[Proof of Proposition $\ref{prop:leftbi:rightideal}$]
We give the proof only for that $\overline{\Delta}(k\langle Gen\rangle)I_2\subset I_2$.
It is sufficient to show that
$\overline{\Delta}(v)(t_L(l)\otimes 1_{A_\sigma}
-1_{A_\sigma}\otimes s_L(l))\in I_2$
for any $v\in Gen$ and $l\in L$,
since $I_2$ is a right ideal.

If $v=(L^{-1})_{ab}$,
then, on account of \eqref{eq:leftbi:translation}
and the generators (3) of the ideal $I_\sigma$,
\begin{align*}
&\overline{\Delta}(v)(t_L(l)\otimes 1_{A_\sigma}
-1_{A_\sigma}\otimes s_L(l))
\\
=&
\sum_{c\in X}
(L^{-1})_{cb}(1_L\otimes l)+I_\sigma
\otimes(L^{-1})_{ac}+I_\sigma
\\
&-
\sum_{c\in X}
(L^{-1})_{cb}+I_\sigma
\otimes(L^{-1})_{ac}(l\otimes 1_L)+I_\sigma
\\
=&
\sum_{c\in X}
(1_L\otimes T_{\deg(c)^{-1}}(l))(L^{-1})_{cb}+I_\sigma
\otimes(L^{-1})_{ac}+I_\sigma
\\
&-
\sum_{c\in X}
(L^{-1})_{cb}+I_\sigma
\otimes(T_{\deg(c)^{-1}}(l)\otimes 1_L)(L^{-1})_{ac}+I_\sigma
\\
=&
\sum_{c\in X}
(t_L(T_{\deg(c)^{-1}}(l))\otimes 1_{A_\sigma}
-1_{A_\sigma}\otimes s_L(T_{\deg(c)^{-1}}(l)))\times
\\
&\quad\times((L^{-1})_{cb}+I_\sigma
\otimes(L^{-1})_{ac}+I_\sigma)
\\
\in& I_2.
\end{align*}

The proof is easy for the other $v\in Gen$.
\end{proof}

The next task is to define the map $\pi_L: A_\sigma\to L$.
For this purpose, we first construct the $k$-algebra homomorphism
$\overline{\varepsilon}: k\langle Gen\rangle\to\mathrm{End}_k(L)$,
which is defined on the generators as follows:
\[
\overline{\varepsilon}(L_{ab})=\delta_{ab}T_{\deg(a)},
\ 
\overline{\varepsilon}((L^{-1})_{ab})=\delta_{ab}T_{\deg(a)^{-1}}
\quad(a, b\in X),
\]
and $\overline{\varepsilon}$ on $L\otimes_kL^{op}$
is the (unique) $k$-linear map
satisfying
$\overline{\varepsilon}(l\otimes l')=\rho_l(l)\rho_r(l')$
$(l, l'\in L)$.
For $\rho_l$
and $\rho_r$, see \eqref{eq:leftbi:rholr}.
\begin{proposition}
$\overline{\varepsilon}(I_\sigma)=\{ 0\}$.
\end{proposition}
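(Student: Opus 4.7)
Since $\overline{\varepsilon}$ is a $k$-algebra homomorphism from $k\langle Gen\rangle$ to $\mathrm{End}_k(L)$ and $I_\sigma$ is the two-sided ideal generated by the elements listed in (1)--(5), it suffices to show that $\overline{\varepsilon}$ sends each of those generators to $0\in\mathrm{End}_k(L)$. I would verify this family by family.

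The generators of type (1) express that the subset $L\otimes_kL^{op}\subset Gen$ behaves inside $A_\sigma$ as the $k$-algebra $L\otimes_kL^{op}$, and the assignment $l\otimes l'\mapsto\rho_l(l)\rho_r(l')$ is well known to define a $k$-algebra homomorphism $L\otimes_kL^{op}\to\mathrm{End}_k(L)$ (left and right multiplications commute, so the two tensor slots act independently). Type (5) is immediate: $\overline{\varepsilon}(\emptyset)=\id_L=\rho_l(1_L)\rho_r(1_L)=\overline{\varepsilon}(1_L\otimes 1_L)$. For type (2), the Kronecker deltas collapse $\sum_c\overline{\varepsilon}(L_{ac})\overline{\varepsilon}((L^{-1})_{cb})$ to $\delta_{ab}\,T_{\deg(a)}\circ T_{\deg(a)^{-1}}=\delta_{ab}\id_L$ by \eqref{eq:leftbi:translation}, matching $\overline{\varepsilon}(\delta_{ab}\emptyset)$; the other relation in (2) is symmetric.

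For type (3), the crucial input is that each $T_\alpha$ is a $k$-algebra automorphism. For the first relation, for any $g\in L$,
\[
\rho_l(T_{\deg(a)}(f))\bigl(T_{\deg(a)}(g)\bigr)=T_{\deg(a)}(f)\,T_{\deg(a)}(g)=T_{\deg(a)}(fg)=\bigl(T_{\deg(a)}\circ\rho_l(f)\bigr)(g),
\]
so $\overline{\varepsilon}((T_{\deg(a)}(f)\otimes 1_L)L_{ab})=\overline{\varepsilon}(L_{ab}(f\otimes 1_L))$. The other three relations in (3) are handled in the same way, with the two involving $(L^{-1})_{ab}$ additionally using \eqref{eq:leftbi:translation} to cancel $T_{\deg(\cdot)}\circ T_{\deg(\cdot)^{-1}}$; note that a $\rho_l$ piece commutes with a $T_\alpha$ precisely because $T_\alpha$ is an algebra automorphism, which is why this step works.

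The substantive step is type (4), and this is where the hypothesis \eqref{eq:leftbi:rhoT} is exactly what is required. Applying $\overline{\varepsilon}$ collapses the double sums via the Kronecker deltas in $\overline{\varepsilon}(L_{yd})=\delta_{yd}T_{\deg(y)}$ and $\overline{\varepsilon}(L_{xb})=\delta_{xb}T_{\deg(x)}$, yielding
\[
\overline{\varepsilon}\Bigl(\sum_{x,y\in X}(\sigma^{xy}_{ac}\otimes 1_L)L_{yd}L_{xb}\Bigr)=\rho_l(\sigma^{bd}_{ac})\circ T_{\deg(d)}\circ T_{\deg(b)},
\]
and analogously
\[
\overline{\varepsilon}\Bigl(\sum_{x,y\in X}(1_L\otimes\sigma^{bd}_{xy})L_{cy}L_{ax}\Bigr)=\rho_r(\sigma^{bd}_{ac})\circ T_{\deg(c)}\circ T_{\deg(a)};
\]
the two expressions agree by \eqref{eq:leftbi:rhoT}. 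The only genuine obstacle in the proof is the index bookkeeping in this last case; once the four deltas have been used to eliminate the summation, the identity that remains is literally \eqref{eq:leftbi:rhoT}, and the rest is mechanical.
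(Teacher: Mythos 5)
Your proof is correct and takes essentially the same approach as the paper's: reduce to the generators of $I_\sigma$, dispose of (1), (2), (5) by the homomorphism property, handle (3) via the fact that $T_\alpha$ is a $k$-algebra automorphism together with \eqref{eq:leftbi:translation}, and handle (4) via \eqref{eq:leftbi:rhoT}; your delta-collapse computation for (4) is exactly right. You merely supply more detail than the paper, which only sketches cases (3) and (4).
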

\begin{proof}
It suffices to prove that
$\overline{\varepsilon}(a)=0$
for any generator $a$ of the two-sided ideal $I_\sigma$.
We give the proof only for the case that $a$ is a generator (3)
or (4).

Because $T_\alpha$ is a $k$-algebra homomorphism
satisfying \eqref{eq:leftbi:translation},
$\overline{\varepsilon}(a)=0$
for the case that $a$ is a generator (3).

By virtue of \eqref{eq:leftbi:rhoT},
$\overline{\varepsilon}(a)=0$
for the case that $a$ is a generator (4).
\end{proof}
According to this proposition,
$\overline{\varepsilon}$
induces
the $k$-algebra homomorphism
$\varepsilon: A_\sigma\to\mathrm{End}_k(L)$,
and the definition of the map $\pi_L$ is that
\[
\pi_L: A_\sigma\ni a\mapsto \varepsilon(a)(1_L)\in L.
\]
This $\pi_L$ is an $L$-bimodule homomorphism.

The triplet
$(A_\sigma, \Delta_L, \pi_L)$ is a comonoid in
the tensor category of $L$-bimodules.
Moreover,
the sextuplet
$A_\sigma=(A_\sigma, L, s_L, t_L, \Delta_L, \pi_L)$
satisfies
\eqref{eq:leftbi:Deltast}--\eqref{eq:leftbi:pist}
in Definition
\ref{def:leftbi:leftbialgebroid},
and
$A_\sigma$ is thus a left bialgebroid,
which completes the proof of Theorem \ref{thm:leftbi:leftbialg}.
\section{Right bialgebroid $A_\sigma$}
In this section, 
we clarify the condition that makes the algebra $A_\sigma$
in the previous section to be a right bialgebroid,
and
show that 
this condition implies
\eqref{eq:leftbi:rhoT};
hence,
$A_\sigma$ is a left and right bialgebroid under the condition.

Let $A$ and $L'$ be associative unital rings,
and let
$s_{L'}: L'\to A$
and $t_{L'}: {L'}^{op}\to A$ be ring homomorphisms
satisfying
\begin{equation}\label{eq:rightbi:commute}
s_{L'}(r)t_{L'}(r')=t_{L'}(r')s_{L'}(r)
\quad(\forall r, r'\in {L'}).
\end{equation}
These two homomorphisms give an $L'$-bimodule structure in $A$,
which is denoted by
${}^{L'}\!\!A^{L'}$,
through the following action:
\begin{equation}\label{eq:rightbi:action}
r\cdot a\cdot r'=as_{L'}(r')t_{L'}(r)
\quad(r, r'\in L', a\in A).
\end{equation}
\begin{definition}\label{def:rightbi:rightbialgebroid}
Let $({}^{L'}\!\!A^{L'}, \Delta_{L'}, \pi_{L'})$
be a comonoid in the tensor category of $L'$-bimodules.
A sextuplet $A_{L'}=(A, L', s_{L'}, t_{L'}, \Delta_{L'}, \pi_{L'})$
is called a right bialgebroid,
iff
{\allowdisplaybreaks
\begin{align}
\label{eq:rightbi:Deltast}
&s_{L'}(r)a^{(1)}\otimes a^{(2)}=a^{(1)}\otimes t_{L'}(r)a^{(2)},
\\\label{eq:rightbi:Delta1A}
&\Delta_{L'}(1_A)=1_A\otimes 1_A,
\\\label{eq:rightbi:Delta}
&\Delta_{L'}(ab)=\Delta_{L'}(a)\Delta_{L'}(b),
\\\label{eq:rightbi:pi1A}
&\pi_{L'}(1_A)=1_{L'},
\\\label{eq:rightbi:pist}
&\pi_{L'}(s_{L'}(\pi_{L'}(a))b)=\pi_{L'}(ab)
=\pi_{L'}(t_{L'}(\pi_{L'}(a))b)
\end{align}
}%
for any $r\in L'$
and $a, b\in A$.
Here, $a^{(1)}\otimes a^{(2)}$ is Sweedler's notation:
$\Delta_{L'}(a)=a^{(1)}\otimes a^{(2)}$.
The right-hand-side of \eqref{eq:rightbi:Delta}
is well defined because of \eqref{eq:rightbi:Deltast}.
\end{definition}
\begin{theorem}\label{thm:rightbi:rightbialg}
If
the elements $\sigma^{ab}_{cd}\in L$
satisfy
\begin{equation}\label{eq:rightbi:rhoT}
T_{\deg(a)^{-1}}\circ T_{\deg(c)^{-1}}\circ\rho_l(\sigma^{bd}_{ac})
=T_{\deg(b)^{-1}}\circ T_{\deg(d)^{-1}}\circ\rho_r(\sigma^{bd}_{ac})
\end{equation}
for any $a, b, c, d\in X$,
then the algebra
$A_\sigma$ $\eqref{eq:leftbi:Asigma}$
is a right bialgebroid for $L'=L^{op}$
$($For $\rho_l$
and $\rho_r$, see $\eqref{eq:leftbi:rholr}$$)$.
\end{theorem}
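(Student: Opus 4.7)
The strategy is to parallel the construction in Section 2 with the appropriate right/opposite twist. Since $L'=L^{op}$, the source and target are $s_{L'}\colon L^{op}\to A_\sigma$ and $t_{L'}\colon L\to A_\sigma$, and the natural definitions are $s_{L'}(l)=1_L\otimes l+I_\sigma$ and $t_{L'}(l)=l\otimes 1_L+I_\sigma$; these coincide as set-maps with $t_L$ and $s_L$ from Section 2, hence are $k$-algebra homomorphisms obeying \eqref{eq:rightbi:commute}.

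Next, I would introduce a $k$-algebra homomorphism $\overline{\Delta}_{L'}\colon k\langle Gen\rangle\to A_\sigma\otimes_k A_\sigma$ using the transposed coproduct on the matrix generators,
\[
\overline{\Delta}_{L'}(L_{ab})=\sum_{c\in X}(L_{cb}+I_\sigma)\otimes(L_{ac}+I_\sigma),
\]
together with the swapped analogues on $(L^{-1})_{ab}$ and on $L\otimes_k L^{op}$. Then, paralleling Proposition \ref{prop:leftbi:rightideal}, set up the right ideal $I'_2$ of $A_\sigma\otimes_k A_\sigma$ generated by $s_{L'}(r)\otimes 1_{A_\sigma}-1_{A_\sigma}\otimes t_{L'}(r)$ ($r\in L'$) and prove $\overline{\Delta}_{L'}(I_\sigma)\subset I'_2$ and $\overline{\Delta}_{L'}(k\langle Gen\rangle)I'_2\subset I'_2$. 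Using $(A_\sigma\otimes_k A_\sigma)/I'_2\cong A_\sigma\otimes_{L^{op}}A_\sigma$ as $\Z$-modules, this induces $\Delta_{L'}\colon A_\sigma\to A_\sigma\otimes_{L^{op}}A_\sigma$.

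For the counit, define $\overline{\varepsilon}_{L'}\colon k\langle Gen\rangle\to\mathrm{End}_k(L)$ with $\overline{\varepsilon}_{L'}(L_{ab})=\delta_{ab}T_{\deg(a)^{-1}}$ and $\overline{\varepsilon}_{L'}((L^{-1})_{ab})=\delta_{ab}T_{\deg(a)}$, extending by the swapped convention on $L\otimes_k L^{op}$ so that the resulting $\pi_{L'}(a)=\varepsilon_{L'}(a)(1_L)$ lands in $L^{op}$. Verifying $\overline{\varepsilon}_{L'}(I_\sigma)=\{0\}$ is routine on generators of type (1), (2), (3), (5); the crucial step is for generators of type (4), where \eqref{eq:rightbi:rhoT} is invoked in exactly the role \eqref{eq:leftbi:rhoT} played earlier. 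The comonoid axioms and the identities \eqref{eq:rightbi:Deltast}--\eqref{eq:rightbi:pist} are then checked on generators, each being the symmetric analogue of a corresponding computation in Section 2.

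For the parenthetical claim that $A_\sigma$ is simultaneously a left bialgebroid, I would deduce \eqref{eq:leftbi:rhoT} from \eqref{eq:rightbi:rhoT}. Evaluating \eqref{eq:rightbi:rhoT} at $1_L$ gives $T_{\deg(a)^{-1}}T_{\deg(c)^{-1}}(\sigma^{bd}_{ac})=T_{\deg(b)^{-1}}T_{\deg(d)^{-1}}(\sigma^{bd}_{ac})$; applying $T_{\deg(c)}\circ T_{\deg(a)}$ and using the cancellations $T_\alpha\circ T_{\alpha^{-1}}=\id_L$ twice in succession shows $T_{\deg(c)}T_{\deg(a)}T_{\deg(b)^{-1}}T_{\deg(d)^{-1}}(\sigma^{bd}_{ac})=\sigma^{bd}_{ac}$. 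Combining this fixed-point identity with $T_{\deg(c)}T_{\deg(a)}$ applied to the full identity \eqref{eq:rightbi:rhoT}, and using that each $T_\alpha$ is a $k$-algebra homomorphism, rearranges to \eqref{eq:leftbi:rhoT}, after which Theorem \ref{thm:leftbi:leftbialg} applies. The main obstacle is the analogue of Proposition \ref{prop:leftbi:rightideal} for the type-(4) (Yang-Baxter-like) generators, where the double-sum manipulation required for $\overline{\Delta}_{L'}$ to land in $I'_2$ must be carried out carefully.
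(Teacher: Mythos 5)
Your overall skeleton (take $s_{L^{op}}=t_L$, $t_{L^{op}}=s_L$, quotient $A_\sigma\otimes_kA_\sigma$ by an ideal $I_2'$, build the counit by evaluation at $1_L$, and deduce \eqref{eq:leftbi:rhoT} from \eqref{eq:rightbi:rhoT} by evaluating at $1_L$ and conjugating by $T_{\deg(c)}\circ T_{\deg(a)}$) is the paper's, and your last paragraph reproduces the proof of Proposition \ref{prop:rightbi:rightleft} correctly. But the central object is wrong: the paper does \emph{not} transpose the comultiplication. It reuses the very same $\overline{\Delta}$ of \eqref{eq:leftbi:DeltaL}, with $\overline{\Delta}(L_{ab})=\sum_{c}L_{ac}\otimes L_{cb}$, and only changes the ideal one quotients by ($I_2'$, which is a \emph{left} ideal, not a right ideal as you wrote, because the relation defining $A_\sigma\otimes_{L^{op}}A_\sigma$ under the action \eqref{eq:rightbi:action} is $xs_{L^{op}}(l)\otimes y=x\otimes yt_{L^{op}}(l)$, i.e.\ right multiplication of the generators $s_{L^{op}}(l)\otimes 1_{A_\sigma}-1_{A_\sigma}\otimes t_{L^{op}}(l)$). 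Your transposed formula $\overline{\Delta}_{L'}(L_{ab})=\sum_{c}L_{cb}\otimes L_{ac}$ fails the Takeuchi compatibility \eqref{eq:rightbi:Deltast}: moving $s_{L'}(r)=1_L\otimes r$ past $L_{cb}$ via the generators (3) of $I_\sigma$ and then across the tensor sign produces $\sum_c L_{cb}\otimes(T_{\deg(a)}T_{\deg(b)^{-1}}(r)\otimes 1_L)L_{ac}$, and the degree shifts do not cancel; with the untransposed coproduct the same computation gives $T_{\deg(c)}\circ T_{\deg(c)^{-1}}=\id_L$ and \eqref{eq:rightbi:Deltast} holds. So this step of your construction would fail.

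A second, subtler gap is the counit. You take the paper's generator values but do not say that the extension to $k\langle Gen\rangle$ must be a $k$-algebra \emph{anti}-homomorphism, with $\overline{\varepsilon}'(l\otimes l')=\rho_l(l')\rho_r(l)$. This is not cosmetic: with a multiplicative extension, annihilating a generator of type (4) requires $\rho_r(\sigma^{bd}_{ac})\circ T_{\deg(d)^{-1}}\circ T_{\deg(b)^{-1}}=\rho_l(\sigma^{bd}_{ac})\circ T_{\deg(c)^{-1}}\circ T_{\deg(a)^{-1}}$, which is not the hypothesis \eqref{eq:rightbi:rhoT}; only the anti-homomorphism ordering reproduces \eqref{eq:rightbi:rhoT} exactly, and that same property is what makes the right counit axiom \eqref{eq:rightbi:pist}, in which $\pi_{L'}(a)$ multiplies $b$ from the left, come out. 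So the assertion that the type-(4) verification uses \eqref{eq:rightbi:rhoT} ``in exactly the role \eqref{eq:leftbi:rhoT} played earlier'' glosses over the two places where the right-handed construction genuinely differs from Section 2.
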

The maps $s_L$ \eqref{eq:leftbi:sL}
and $t_L$
\eqref{eq:leftbi:tL}
define
$s_{L^{op}}: L^{op}\to A_\sigma$
and
$t_{L^{op}}: L\to A_\sigma$:
$s_{L^{op}}:=t_L$;
$t_{L^{op}}:=s_L$.
These are $k$-algebra homomorphisms
satisfying \eqref{eq:rightbi:commute},
and $A_\sigma$ is consequently an $L^{op}$-bimodule
with the action \eqref{eq:rightbi:action}.

In order to define the map $\Delta_{L^{op}}$,
we make use of the $k$-algebra homomorphism
$\overline{\Delta}: k\langle Gen\rangle\to A_\sigma\otimes_kA_\sigma$
in
\eqref{eq:leftbi:DeltaL}.
We write $I_2'$ for the left ideal of $A_\sigma\otimes_kA_\sigma$
whose generators are
$s_{L^{op}}(l)\otimes 1_{A_\sigma}-1_{A_\sigma}\otimes t_{L^{op}}(l)$
$(\forall l\in L^{op}(=L))$.
This $I_2'$ is a $k$-module
and 
$\overline{\Delta}(I_\sigma)\subset I_2'$,
which induces a $k$-module homomorphism
$\widetilde{\Delta}': A_\sigma\to(A_\sigma\otimes_k
A_\sigma)/I_2'$.
The proof is similar to that of
Proposition \ref{prop:leftbi:rightideal}.
Because
$(A_\sigma\otimes_kA_\sigma)/I_2'\cong 
A_\sigma\otimes_{L^{op}}A_\sigma$
as $\Z$-modules,
this $\widetilde{\Delta}'$
implies 
the $\Z$-module homomorphism
$\Delta_{L^{op}}: A_\sigma\to A_\sigma\otimes_{L^{op}}A_\sigma$,
which is also an $L^{op}$-bimodule homomorphism.

The next task is to define a map $\pi_{L^{op}}: A_\sigma\to L^{op}$.
We construct the $k$-algebra anti-homomorphism
$\overline{\varepsilon}': k\langle Gen\rangle\to\mathrm{End}_k(L^{op})$
whose definition on the generators is as follows:
\[
\overline{\varepsilon}'(L_{ab})=\delta_{ab}T_{\deg(a)^{-1}},
\ 
\overline{\varepsilon}'((L^{-1})_{ab})=\delta_{ab}T_{\deg(a)}
\quad(a, b\in X),
\]
and $\overline{\varepsilon}'$ on $L\otimes_kL^{op}$
is the (unique) $k$-linear map
satisfying
$\overline{\varepsilon}'(l\otimes l')=\rho_l(l')\rho_r(l)$
$(l, l'\in L^{op}(=L))$.
According to the fact that 
$\overline{\varepsilon}'(I_\sigma)=\{ 0\}$,
$\overline{\varepsilon}'$
induces
the $k$-algebra anti-homomorphism
$\varepsilon': A_\sigma\to\mathrm{End}_k(L^{op})$,
and the definition of the map $\pi_{L^{op}}$ is the following.
\[
\pi_{L^{op}}: A_\sigma\ni a\mapsto \varepsilon'(a)(1_L)\in L^{op}.
\]
This $\pi_{L^{op}}$ is an $L^{op}$-bimodule homomorphism.

The triplet
$(A_\sigma, \Delta_{L^{op}}, \pi_{L^{op}})$ is a comonoid in
the tensor category of $L^{op}$-bimodules.
In addition,
the sextuplet
$A_\sigma=(A_\sigma, L^{op}, s_{L^{op}}, t_{L^{op}},
\Delta_{L^{op}}, \pi_{L^{op}})$
satisfies
\eqref{eq:rightbi:Deltast}--\eqref{eq:rightbi:pist}
in Definition
\ref{def:rightbi:rightbialgebroid},
and
$A_\sigma$ is hence a right bialgebroid.
\begin{proposition}\label{prop:rightbi:rightleft}
This right bialgebroid $A_\sigma$ satisfying
$\eqref{eq:rightbi:rhoT}$ is also a left bialgebroid.
\end{proposition}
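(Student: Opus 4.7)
The plan is to reduce the statement to Theorem \ref{thm:leftbi:leftbialg}: it suffices to show that the right bialgebroid condition \eqref{eq:rightbi:rhoT} implies the left bialgebroid condition \eqref{eq:leftbi:rhoT}, after which the left bialgebroid structure on $A_\sigma$ is furnished by the construction of Section 2.

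The main algebraic tool I would use is the intertwining identity, valid because each $T_\alpha$ is a $k$-algebra automorphism of $L$: for every $f\in L$ and $\alpha\in G$, $T_\alpha\circ\rho_l(f)=\rho_l(T_\alpha(f))\circ T_\alpha$ and $T_\alpha\circ\rho_r(f)=\rho_r(T_\alpha(f))\circ T_\alpha$. Together with the inverse relation \eqref{eq:leftbi:translation}, this lets me move $\rho_l$ and $\rho_r$ across compositions of $T_\alpha$'s at the price of transporting the scalar by the same $T_\alpha$'s, and then cancel pairs $T_\alpha\circ T_{\alpha^{-1}}$.

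Concretely, I would first compose both sides of \eqref{eq:rightbi:rhoT} on the left with $T_{\deg(c)}\circ T_{\deg(a)}$. By \eqref{eq:leftbi:translation} the left-hand side collapses to $\rho_l(\sigma^{bd}_{ac})$, yielding
\[
\rho_l(\sigma^{bd}_{ac})
=T_{\deg(c)}\circ T_{\deg(a)}\circ T_{\deg(b)^{-1}}\circ T_{\deg(d)^{-1}}\circ\rho_r(\sigma^{bd}_{ac}).
\]
Next, I would compose both sides on the right with $T_{\deg(d)}\circ T_{\deg(b)}$ and apply the intertwining identity for $\rho_r$ twice, first to push $T_{\deg(b)^{-1}}\circ T_{\deg(d)^{-1}}$ through $\rho_r$ (using \eqref{eq:leftbi:translation} to cancel with the trailing $T_{\deg(d)}\circ T_{\deg(b)}$), and then to push $T_{\deg(c)}\circ T_{\deg(a)}$ through the resulting $\rho_r$. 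The outcome is
\[
\rho_l(\sigma^{bd}_{ac})\circ T_{\deg(d)}\circ T_{\deg(b)}
=\rho_r\!\bigl(T_{\deg(c)}T_{\deg(a)}T_{\deg(b)^{-1}}T_{\deg(d)^{-1}}(\sigma^{bd}_{ac})\bigr)\circ T_{\deg(c)}\circ T_{\deg(a)}.
\]
To identify the scalar inside $\rho_r$ with $\sigma^{bd}_{ac}$, I would evaluate the original identity \eqref{eq:rightbi:rhoT} at $1_L\in L$ to obtain $T_{\deg(a)^{-1}}T_{\deg(c)^{-1}}(\sigma^{bd}_{ac})=T_{\deg(b)^{-1}}T_{\deg(d)^{-1}}(\sigma^{bd}_{ac})$, and then apply $T_{\deg(c)}\circ T_{\deg(a)}$ to both sides, which by \eqref{eq:leftbi:translation} reduces the left side to $\sigma^{bd}_{ac}$. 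Substituting this equality into the previous display gives \eqref{eq:leftbi:rhoT}.

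The only subtlety, and what I see as the main obstacle, is that the hypotheses on $T$ include neither commutativity of the operators $T_\alpha$ nor the group-homomorphism property $T_\alpha\circ T_\beta=T_{\alpha\beta}$, so every simplification must be performed using only the literal inverse relation \eqref{eq:leftbi:translation}; in particular, one must carefully order the compositions of four distinct $T_\alpha$'s and cancel only adjacent $T_\alpha\circ T_{\alpha^{-1}}$ pairs, never permuting non-adjacent factors.
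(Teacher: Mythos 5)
Your proposal is correct and follows essentially the same route as the paper's proof: both reduce the claim to showing that \eqref{eq:rightbi:rhoT} implies \eqref{eq:leftbi:rhoT}, derive \eqref{eq:rigid:TT} by evaluating \eqref{eq:rightbi:rhoT} at $1_L$, and then use the intertwining relation $T_\alpha\circ\rho_r(f)=\rho_r(T_\alpha(f))\circ T_\alpha$ together with \eqref{eq:leftbi:translation} to rewrite $\rho_l(\sigma^{bd}_{ac})\circ T_{\deg(d)}\circ T_{\deg(b)}$ as $\rho_r(\sigma^{bd}_{ac})\circ T_{\deg(c)}\circ T_{\deg(a)}$. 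The only difference is cosmetic (you pre- and post-compose in two separate steps rather than in one chain of equalities), and your caution about cancelling only adjacent pairs $T_\alpha\circ T_{\alpha^{-1}}$ is well placed and respected by both arguments.
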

\begin{proof}
It suffices to prove that
\eqref{eq:rightbi:rhoT}
implies \eqref{eq:leftbi:rhoT}.
From \eqref{eq:rightbi:rhoT},
\[
T_{\deg(a)^{-1}}\circ T_{\deg(c)^{-1}}
\circ\rho_l(\sigma^{bd}_{ac})(1_L)
=T_{\deg(b)^{-1}}\circ T_{\deg(d)^{-1}}
\circ\rho_r(\sigma^{bd}_{ac})(1_L),
\]
which is exactly the same as
\begin{equation}\label{eq:rigid:TT}
T_{\deg(a)^{-1}}\circ T_{\deg(c)^{-1}}(\sigma^{bd}_{ac})
=T_{\deg(b)^{-1}}\circ T_{\deg(d)^{-1}}(\sigma^{bd}_{ac}).
\end{equation}
By means of \eqref{eq:rightbi:rhoT}
and \eqref{eq:rigid:TT},
the left-hand-side of \eqref{eq:leftbi:rhoT} is
{\allowdisplaybreaks
\begin{align*}
&
T_{\deg(c)}\circ T_{\deg(a)}\circ T_{\deg(a)^{-1}}
\circ T_{\deg(c)^{-1}}\circ\rho_l(\sigma^{bd}_{ac})
\circ T_{\deg(d)}\circ T_{\deg(b)}
\\
=&
T_{\deg(c)}\circ T_{\deg(a)}\circ T_{\deg(b)^{-1}}
\circ T_{\deg(d)^{-1}}\circ\rho_r(\sigma^{bd}_{ac})
\circ T_{\deg(d)}\circ T_{\deg(b)}
\\
=&
T_{\deg(c)}\circ T_{\deg(a)}\circ 
\rho_r(T_{\deg(b)^{-1}}
\circ T_{\deg(d)^{-1}}(\sigma^{bd}_{ac}))
\\
=&
\rho_r(T_{\deg(c)}\circ T_{\deg(a)}\circ T_{\deg(b)^{-1}}
\circ T_{\deg(d)^{-1}}(\sigma^{bd}_{ac}))
\circ T_{\deg(c)}\circ T_{\deg(a)}
\\
=&
\rho_r(\sigma^{bd}_{ac})
\circ T_{\deg(c)}\circ T_{\deg(a)}.
\end{align*}}%
This establishes the formula.
\end{proof}
\begin{remark}\label{rem:rightbi:remark}
We note that \eqref{eq:rightbi:rhoT} is equivalent to
\eqref{eq:leftbi:rhoT} and \eqref{eq:rigid:TT}.
\end{remark}
\section{Rigid $\sigma$}
Let $\sigma^{ab}_{cd}\in L$
$(a, b, c, d\in X)$
satisfying
\eqref{eq:rigid:TT}
for any $a, b, c, d\in X$,
and we write $\sigma=(\sigma^{ab}_{cd})_{a, b, c, d\in X}$.
This section deals with a property of $\sigma$ that makes
the algebra $A_\sigma$ a Hopf algebroid.
\begin{definition}
$\sigma$ is called rigid
(cf. \cite[Section 4.5]{etingof}),
iff,
for any $a, b\in X$,
there exist $x_{ab}, y_{ab}\in A_\sigma$
such that
\begin{align*}
\sum_{c\in X}((L^{-1})_{cb}+I_\sigma)x_{ac}
&=
\sum_{c\in X}x_{cb}((L^{-1})_{ac}+I_\sigma)
\\
&=
\sum_{c\in X}(L_{cb}+I_\sigma)y_{ac}
\\
&=
\sum_{c\in X}y_{cb}(L_{ac}+I_\sigma)
\\
&=
\delta_{ab}1_{A_\sigma}.
\end{align*}
\end{definition}
\begin{proposition}\label{prop:rigid:main}
The following conditions are equivalent\/$:$
\begin{enumerate}
\item[$(1)$]
$\sigma$ is rigid\/$;$
\item[$(2)$]
There exists a $k$-algebra anti-automorphism
$S: A_\sigma\to A_\sigma$
such that
\begin{align*}
&S(f\otimes 1_L+I_\sigma)=1_L\otimes f+I_\sigma,
S(1_L\otimes f+I_\sigma)=f\otimes 1_L+I_\sigma
\quad(\forall f\in L),
\\
&
S(L_{ab}+I_\sigma)=(L^{-1})_{ab}+I_\sigma
\quad(\forall a, b\in X).
\end{align*}
\end{enumerate}
\end{proposition}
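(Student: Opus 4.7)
\emph{Plan.} I handle the two implications separately. For $(2)\Rightarrow(1)$, set $x_{ab}:=S((L^{-1})_{ab}+I_\sigma)$ and $y_{ab}:=S^{-1}(L_{ab}+I_\sigma)$; applying the anti-homomorphisms $S$ and $S^{-1}$ to the two generator-$(2)$ identities $\sum_c L_{ac}(L^{-1})_{cb}=\delta_{ab}1_{A_\sigma}$ and $\sum_c(L^{-1})_{ac}L_{cb}=\delta_{ab}1_{A_\sigma}$ produces directly all four rigidity relations.

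For $(1)\Rightarrow(2)$ the plan is to define a $k$-algebra anti-homomorphism $\overline{S}:k\langle Gen\rangle\to A_\sigma$ on generators by $\overline{S}(l\otimes l'):=l'\otimes l+I_\sigma$, $\overline{S}(L_{ab}):=(L^{-1})_{ab}+I_\sigma$, $\overline{S}((L^{-1})_{ab}):=x_{ab}$, $\overline{S}(\emptyset):=1_{A_\sigma}$, and then to verify $\overline{S}(I_\sigma)=\{0\}$ on each generator family $(1)$--$(5)$. Families $(1)$ and $(5)$ are immediate (the tensor-factor swap is an anti-automorphism of $L\otimes_k L^{op}$), and $(2)$ is the pair of rigidity relations for $x$. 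Of the four generator-$(3)$ commutations, the $\overline{S}$-images of two are themselves in $I_\sigma$; the other two assert the commutation of $x_{ab}$ with $f\otimes 1_L$ and $1_L\otimes f$ (up to $T_{\deg}$-twists), which I would prove via a uniqueness lemma: if a family $(w_{ac})_{a,c\in X}$ in $A_\sigma$ satisfies $\sum_c(L^{-1})_{cb}w_{ac}=0$ for all $a,b$, then $w=0$. This follows by left-multiplying by $x_{bd}$, summing over $b$, and using the reindexed rigidity identity $\sum_b x_{bd}(L^{-1})_{cb}=\delta_{cd}$; applied to an appropriate difference (whose behaviour under $\sum_c(L^{-1})_{cb}(-)$ is controlled by the generator-$(3)$ commutations of $(L^{-1})_{ab}$ together with $T_{\deg(c)^{-1}}\circ T_{\deg(c)}=\id_L$), it yields the desired identities.

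The main obstacle is generator $(4)$, whose $\overline{S}$-image is the dual relation
\[
\sum_{x,y\in X}(L^{-1})_{xb}(L^{-1})_{yd}(1_L\otimes\sigma^{xy}_{ac})=\sum_{x,y\in X}(L^{-1})_{ax}(L^{-1})_{cy}(\sigma^{bd}_{xy}\otimes 1_L),
\]
which must be established directly in $A_\sigma$. My approach is to derive it from the original generator $(4)$ by an inversion argument: multiply on both sides by suitable $(L^{-1})$-strings so as to replace the $L$'s by $L^{-1}$'s, simultaneously commuting the scalars $s_L(\sigma)$ and $t_L(\sigma)$ through the $L$'s by generator $(3)$ (which produces $T_{\deg}$ twists), and collapsing these twists via the hypothesis \eqref{eq:rigid:TT}, which identifies $T_{\deg(x)^{-1}}T_{\deg(y)^{-1}}(\sigma^{xy}_{ac})$ with $T_{\deg(a)^{-1}}T_{\deg(c)^{-1}}(\sigma^{xy}_{ac})$; the asymmetry between the $s_L$ and $t_L$ sides in generator $(4)$ matches the swap of tensor factors performed by $\overline{S}$. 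I expect this to be the most calculation-heavy step.

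Finally, bijectivity of $S$: the mirror construction with $y_{ab}$ yields an anti-homomorphism $S':A_\sigma\to A_\sigma$ with $S'(L_{ab}+I_\sigma)=y_{ab}$ and $S'((L^{-1})_{ab}+I_\sigma)=L_{ab}+I_\sigma$; applying $S$ to the two rigidity relations for $y$ shows $S(y_{ab})$ is simultaneously a left and right matrix inverse of $(L^{-1})_{ab}$, so by uniqueness of inverses $S(y_{ab})=L_{ab}+I_\sigma$, giving $S\circ S'=\id_{A_\sigma}$, and symmetrically $S'\circ S=\id_{A_\sigma}$.
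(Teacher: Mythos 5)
Your proposal is correct and follows essentially the same route as the paper: the implication $(2)\Rightarrow(1)$ via $x_{ab}=S((L^{-1})_{ab}+I_\sigma)$, $y_{ab}=S^{-1}(L_{ab}+I_\sigma)$, and $(1)\Rightarrow(2)$ by defining the anti-homomorphism $\overline{S}$ on $k\langle Gen\rangle$ and checking $\overline{S}(I_\sigma)=\{0\}$, with the generator-$(4)$ case handled exactly as in the paper by sandwiching that generator between $(L^{-1})$-factors, commuting the scalars through via generator $(3)$, and collapsing the resulting twists with \eqref{eq:rigid:TT}. Your additional details --- the uniqueness lemma for the generator-$(3)$ commutations of $x_{ab}$ and the mirror anti-homomorphism $S'$ establishing bijectivity --- are sound elaborations of steps the paper leaves implicit (the paper verifies only the generator-$(4)$ case of the Claim and does not spell out why $S$ is invertible), though note that the well-definedness of $S'$ on generator $(4)$ is a genuinely separate computation that your sketch asserts by symmetry rather than carries out.
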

\begin{proof}
The condition (2) implies (1),
if
we set
$x_{ab}=S((L^{-1})_{ab}+I_\sigma)$
and 
$y_{ab}=S^{-1}(L_{ab}+I_\sigma)$
$(a, b\in X)$.

Next we show that the condition (1) induces (2).
Because $k\langle Gen\rangle$ is free,
there uniquely exists a $k$-algebra homomorphism
$\overline{S}: k\langle Gen\rangle\to A_\sigma^{op}$
such that
\begin{equation}\label{eq:rigid:overS}
\begin{cases}
\overline{S}(l\otimes l')=l'\otimes l+I_\sigma
&(l, l'\in L(=L^{op}));
\\
\overline{S}(L_{ab})=(L^{-1})_{ab}+I_\sigma
&(a, b\in X);
\\
\overline{S}((L^{-1})_{ab})=x_{ab}
&(a, b\in X).
\end{cases}
\end{equation}
This $\overline{S}: k\langle Gen\rangle\to A_\sigma$
is a $k$-algebra anti-homomorphism.

We claim that the following is true.
\begin{claim}\label{claim:rigid:overS}
$\overline{S}(I_\sigma)=\{ 0\}$.
\end{claim}
Assuming this claim for the moment,
we complete the proof.
This claim immediately induces
a $k$-algebra anti-homomorphism
$S: A_\sigma\to A_\sigma$
defined by 
$S(a+I_\sigma)=\overline{S}(a)$
$(a\in k\langle Gen\rangle)$.
This $S$ is the desired one.
\end{proof}
\begin{proof}[Proof of Claim $\ref{claim:rigid:overS}$]
It is sufficient to show $\overline{S}(a)=0$
for any generator of the two-sided ideal $I_\sigma$.
We give the proof only for the case that $a$ is the generator
(4).

Let $x', y', x'', y''\in X$.
From the generator (4) of $I_\sigma$,
\begin{align}\nonumber
0_{A_\sigma}=&\sum_{a, b, c, d\in X}
((L^{-1})_{x''a}+I_\sigma)((L^{-1})_{y''c}+I_\sigma)\times
\\\nonumber
&\quad\times
(\sum_{x, y\in X}(\sigma_{ac}^{xy}\otimes 1_L)L_{yd}L_{xb}+I_\sigma
-\sum_{x, y\in X}(1_L\otimes\sigma_{xy}^{bd})L_{cy}L_{cx}+I_\sigma)
\times
\\\label{eq:rigid:claim}
&\quad\times
((L^{-1})_{bx'}+I_\sigma)((L^{-1})_{dy'}+I_\sigma).
\end{align}
On account of 
the generators (2) and (3) of $I_\sigma$,
the right-hand-side of \eqref{eq:rigid:claim} is 
\begin{align*}
&\sum_{a, c\in X}
((L^{-1})_{x''a}+I_\sigma)((L^{-1})_{y''c}+I_\sigma)
(\sigma_{ac}^{x'y'}\otimes 1_L+I_\sigma)
\\
-&
\sum_{b, d\in X}
((L^{-1})_{bx'}+I_\sigma)((L^{-1})_{dy'}+I_\sigma)\times
\\
&\quad\times
(1_L\otimes T_{\deg(d)}T_{\deg(b)}T_{\deg(x'')^{-1}}
T_{\deg(y'')^{-1}}(\sigma_{x''y''}^{bd})+I_\sigma).
\end{align*}
It follows from \eqref{eq:rigid:TT}
that
$T_{\deg(d)}T_{\deg(b)}T_{\deg(x'')^{-1}}
T_{\deg(y'')^{-1}}(\sigma_{x''y''}^{bd})
=\sigma_{x''y''}^{bd}$,
and \eqref{eq:rigid:claim} is
\begin{align*}
0_{A_\sigma}=&\sum_{a, c\in X}
((L^{-1})_{x''a}+I_\sigma)((L^{-1})_{y''c}+I_\sigma)
(\sigma_{ac}^{x'y'}\otimes 1_L+I_\sigma)
\\
&-\sum_{b, d\in X}
((L^{-1})_{bx'}+I_\sigma)((L^{-1})_{dy'}+I_\sigma)
(1_L\otimes \sigma_{x''y''}^{bd}+I_\sigma).
\end{align*}
Therefore, by virtue of \eqref{eq:rigid:overS},
\begin{align*}
&\overline{S}(\sum_{x, y\in X}(\sigma^{xy}_{ac}\otimes1_L)L_{yd}L_{xb}
-
\sum_{x, y\in X}(1_L\otimes\sigma^{bd}_{xy})L_{cy}L_{ax})
\\
=&
\sum_{x, y\in X}\overline{S}(L_{xb})\overline{S}(L_{yd})
\overline{S}(\sigma^{xy}_{ac}\otimes1_L)
-
\sum_{x, y\in X}\overline{S}(L_{ax})\overline{S}(L_{cy})
\overline{S}(1_L\otimes\sigma^{bd}_{xy})
\\
=&
0_{A_\sigma},
\end{align*}
and the proof is complete.
\end{proof}
\section{Hopf algebroid $A_\sigma$}
In this section,
we introduce the notion of 
the Hopf algebroid
and proceed with the study of the left and right bialgebroid
$A_\sigma$.

Let $A_L=(A, L, s_L, t_L, \Delta_L, \pi_L)$
be a left bialgebroid, together with 
an anti-automorphism $S$ of the ring $A$,
and let $L'$ be a ring isomorphic to
the opposite ring $L^{op}$.
We will denote by $\nu: L^{op}\to L'$ this isomorphism.

According to \eqref{eq:leftbi:action},
the ring $A$ has the left $L$-module structure
denoted by ${}_LA$
and
the right $L$-module structure denoted by $A_L$:
\[
{}_LA: l\cdot a=s_L(l)a;
A_L: a\cdot l=t_L(l)a
\quad(l\in L, a\in A).
\]
Moreover,
the ring $A$ has left and right $L'$-module structures
written by ${}^{L'}\!A$ and $A^{L'}$ respectively:
\[
{}^{L'}\!A:
r\cdot a=as_L(\nu^{-1}(r));
A^{L'}:
a\cdot r=aS(s_L(\nu^{-1}(r)))
\quad(r\in L', a\in A).
\]

If the maps $S$, $s_L$, and $t_L$
satisfy
\begin{equation}\label{eq:hopfalg:Sts}
S\circ t_L=s_L,
\end{equation}
then
there uniquely exists
a $\Z$-module map
$S_{A\otimes_LA}: A_L\otimes {}_LA\to A^{L'}\otimes{}^{L'}\!A$
such that
$S_{A\otimes_LA}(a\otimes b)=S(b)\otimes S(a)$
$(a, b\in A)$.

From \eqref{eq:hopfalg:Sts},
$S(a_{(1)})a_{(2)}$ makes sense.
Here,
$\Delta_L(a)=a_{(1)}\otimes a_{(2)}$ is Sweedler's notation. 
If the maps $S$, $s_L$, $t_L$,
and $\pi_L$
satisfy \eqref{eq:hopfalg:Sts}
and
\begin{equation}\label{eq:hopfalg:Stpi}
S(a_{(1)})a_{(2)}=t_L\circ\pi_L\circ S(a)
\quad(\forall a\in A),
\end{equation}
then
there uniquely exists
a $\Z$-module map
$S_{A\otimes_{L'}A}: A^{L'}\otimes {}^{L'}\!A\to A_L\otimes{}_LA$
such that
$S_{A\otimes_{L'}A}(a\otimes b)=S(b)\otimes S(a)$
$(a, b\in A)$.

We write $\Delta_{L'}$ for $S_{A\otimes_LA}\circ\Delta_L\circ S^{-1}$.
\begin{definition}
A pair $(A_L, S)$
of a left bialgebroid and an anti-automorphism
$S$ of the ring A 
satisfying
\eqref{eq:hopfalg:Sts},
\eqref{eq:hopfalg:Stpi},
and
\[
(\Delta_L\otimes\mathrm{id}_A)\circ\Delta_{L'}
=(\mathrm{id}_A\otimes\Delta_{L'})\circ\Delta_L,
(\Delta_{L'}\otimes\mathrm{id}_A)\circ\Delta_L
=(\mathrm{id}_A\otimes\Delta_L)\circ\Delta_{L'}
\]
is a Hopf algebroid,
iff
there exists the inverse
$S_{A\otimes_{L'}A}^{-1}$
of $S_{A\otimes_{L'}A}$
such that
\[
S_{A\otimes_LA}\circ\Delta_L\circ S^{-1}=
S_{A\otimes_{L'}A}^{-1}\circ\Delta_L\circ S.
\]
\end{definition} 
If $\sigma$ satisfies $\eqref{eq:rightbi:rhoT}$,
$A_\sigma$ is a left bialgebroid and a right bialgebroid
on account of Proposition \ref{prop:rightbi:rightleft};
moreover,
\eqref{eq:rigid:TT} holds
(See the proof of Proposition \ref{prop:rightbi:rightleft}).
\begin{theorem}
The algebra $A_\sigma$
with the $k$-algebra anti-automorphism $S$
in Proposition $\ref{prop:rigid:main}$ $(2)$
is a Hopf algebroid
for a rigid $\sigma$ satisfying $\eqref{eq:rightbi:rhoT}$.
\end{theorem}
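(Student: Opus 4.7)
The plan is to verify the four conditions of the Hopf algebroid definition for $(A_\sigma, S)$, where $A_\sigma$ carries the left bialgebroid structure of Section~2 and $S$ is the anti-automorphism of Proposition~\ref{prop:rigid:main}(2). The compatibility $S\circ t_L = s_L$ (and dually $S\circ s_L = t_L$) is immediate from the explicit formula for $S$ on the $L\otimes_k L^{op}$-part of the generators.

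Next I would establish the antipode-coproduct identity $S(a_{(1)})a_{(2)} = t_L\pi_L S(a)$ by checking it on the four families of algebra generators and then extending multiplicatively. For $a=s_L(l)$ and $a=t_L(l)$ the identity reduces to $\pi_L\circ s_L = \pi_L\circ t_L = \mathrm{id}_L$. For $a = L_{ab}+I_\sigma$ both sides equal $\delta_{ab}1_{A_\sigma}$: the left-hand side by the defining relation $\sum_c (L^{-1})_{ac}L_{cb} = \delta_{ab}\emptyset$ of $I_\sigma$, the right-hand side via $\pi_L((L^{-1})_{ab}+I_\sigma) = \delta_{ab}T_{\deg(a)^{-1}}(1_L) = \delta_{ab}\cdot 1_L$. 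For $a = (L^{-1})_{ab}+I_\sigma$ the left-hand side is $\delta_{ab}1_{A_\sigma}$ directly by rigidity; for the right-hand side I would apply the $k$-algebra homomorphism $\varepsilon$ to the rigidity relation $\sum_c((L^{-1})_{cb}+I_\sigma)x_{ac} = \delta_{ab}1_{A_\sigma}$ to obtain $T_{\deg(b)^{-1}}\varepsilon(x_{ab}) = \delta_{ab}\mathrm{id}_L$, hence $\pi_L(x_{ab}) = \delta_{ab}\cdot 1_L$. For arbitrary $a$ the identity then follows by induction on the length of $a$ as a product of generators, from $\phi(ab) = S(b_{(1)})\bigl(S(a_{(1)})a_{(2)}\bigr)b_{(2)}$ together with the commutation relations~(3) of $I_\sigma$ and the bialgebroid axiom \eqref{eq:leftbi:pist}.

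The remaining axioms are then routine. Mixed coassociativity follows because $\Delta_L$ and $\Delta_{L^{op}}$ both descend from the same $k$-algebra map $\overline{\Delta}$ on the free algebra, so the two mixed identities reduce to coassociativity of $\overline{\Delta}$ on the generators $L_{ab}$ and $(L^{-1})_{ab}$. The inverse $S_{A\otimes_{L^{op}}A}^{-1}$ is defined by $a\otimes b\mapsto S^{-1}(b)\otimes S^{-1}(a)$, well-defined thanks to $S\circ t_L = s_L$, and the required compatibility $S_{A\otimes_L A}\circ\Delta_L\circ S^{-1} = S_{A\otimes_{L^{op}}A}^{-1}\circ\Delta_L\circ S$ follows by a direct generator-level computation exploiting the $L_{ab}\leftrightarrow(L^{-1})_{ab}$ symmetry of $\overline{\Delta}$.

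The main obstacle is the $(L^{-1})_{ab}$ case of the antipode-coproduct identity, where rigidity is used in an essential way---together with the algebra homomorphism $\varepsilon$---to compute $\pi_L(x_{ab})$ and to verify that the left-hand side equals $\delta_{ab}1_{A_\sigma}$.
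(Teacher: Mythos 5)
Your outline is correct, and it follows essentially the route the paper intends: the paper itself gives no proof here but defers to Theorem 3.9 of \cite{s2016}, whose argument is exactly this kind of generator-by-generator verification of \eqref{eq:hopfalg:Sts}, \eqref{eq:hopfalg:Stpi}, mixed coassociativity, and the invertibility condition on $S_{A\otimes_{L'}A}$, with rigidity entering precisely where you place it (the $(L^{-1})_{ab}$ case, via $\varepsilon$ applied to the rigidity relations). The only point worth making explicit in a write-up is the identification of $\Delta_{L'}=S_{A\otimes_LA}\circ\Delta_L\circ S^{-1}$ with the right coproduct $\Delta_{L^{op}}$ of Section 3, which you use implicitly; it follows since both are algebra maps agreeing on $s_L(L)$, $t_L(L)$, and the $(L^{-1})_{ab}$, hence on the $L_{ab}$ by uniqueness of two-sided matrix inverses.
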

The proof of this theorem is 
similar to that of Theorem 3.9 in
\cite{s2016}.
\section{Sufficient conditions for rigidity}
In this section,
we continue the study of the rigid $\sigma$ in Section 4.
We will write $\tilde{\sigma}^{ab}_{cd}=
T_{\deg(d)^{-1}}(\sigma^{ab}_{cd})\in L$
for $a, b, c, d\in X$.
\begin{theorem}\label{thm:suffcr:rigid}
Under the following conditions $(1)$--$(5)$,
$\sigma$ satisfying
$\eqref{eq:rigid:TT}$
is rigid.
\begin{enumerate}
\item[$(1)$]
For any $a, b, c, d\in X$, there exists
$i_{\ast}(\tilde{\sigma})_{cd}^{ab}\in L$
such that
$\displaystyle
\sum_{a, b\in X}i_{\ast}(\tilde{\sigma})_{zb}^{wa}\;\tilde{\sigma}_{ax}^{by}=
\sum_{a, b\in X}\tilde{\sigma}_{zb}^{wa}\;i_{\ast}(\tilde{\sigma})_{ax}^{by}=
\delta_{wx}\delta_{yz}1_L$\/$;$
\item[$(2)$]
For $a, b\in X$,
we will write
$\displaystyle Q_{ab}:=\sum_{u\in X}i_{\ast}(\tilde{\sigma})_{ua}^{ub}\in L$.
Then there exists $Q_{ab}^{-1}\in L$
such that
$\sum_{b\in X}Q_{ab}Q_{bc}^{-1}=\delta_{ac}1_L$\/$;$
\item[$(3)$]
For $a, b\in X$,
let $Q'_{ab}$
denote the element $\displaystyle \sum_{u\in X}i_{\ast}(\tilde{\sigma})_{au}^{bu}\in L$.
Then
there exists ${Q'}_{ab}^{-1}\in L$
such that
$\sum_{b\in X}{Q'}_{bc}^{-1}Q'_{ab}=\delta_{ac}1_L$\/$;$
\item[$(4)$]
For $a, b\in X$,
we will set $\displaystyle Q''_{ab}:=
\sum_{u\in X}T_{\deg(b)}(i_{\ast}(\tilde{\sigma})_{au}^{bu})\in L$. 
Then there exists ${Q''}_{ab}^{-1}\in L$
such that
$\sum_{b\in X}{Q''}_{ab}^{-1}Q''_{bc}=\delta_{ac}1_L$\/$;$
\item[$(5)$]
For $a, b\in X$,
let us denote by 
$Q'''_{ab}$
the element
$\displaystyle \sum_{u\in X}
T_{\deg(a)^{-1}}(i_{\ast}(\tilde{\sigma})_{ub}^{ua})\in L$.
Then there exists ${Q'''}_{ab}^{-1}\in L$
such that
$\sum_{b\in X}Q'''_{ab}{Q'''}_{bc}^{-1}
=\delta_{ac}1_L$.
\end{enumerate}
\end{theorem}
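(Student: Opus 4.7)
The plan is to build the rigidity data $\{x_{ab}, y_{ab}\}_{a,b\in X}$ in $A_\sigma$ explicitly from the generators $L_{ab}, (L^{-1})_{ab}$ together with the elements $i_\ast(\tilde\sigma)_{cd}^{ab}$ and the four inverse families $Q^{-1}, (Q')^{-1}, (Q'')^{-1}, (Q''')^{-1}$ supplied by hypotheses $(2)$--$(5)$. The four rigidity identities split naturally into two pairs: those involving $(L^{-1})_{ab}$ are resolved by the $x_{ab}$, those involving $L_{ab}$ by the $y_{ab}$; within each pair, the ``left'' and the ``right'' equation each require one of the $Q$-systems to close up the free indices. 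This bijection between the hypotheses $(2)$--$(5)$ and the four equations in the definition of rigidity is exactly what the structure of the theorem encodes.

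Before positing the ansatz I would derive an \emph{inverse RLL relation} in $A_\sigma$ written in terms of $\tilde\sigma$. Conjugating the generator $(4)$ of $I_\sigma$ by appropriate products of $(L^{-1})_{\ast\ast}$---as is already done in the calculation \eqref{eq:rigid:claim} used to prove Claim \ref{claim:rigid:overS}---and cancelling the inner $L\cdot L^{-1}$ pairs via generator $(2)$, one obtains an identity among two $(L^{-1})_{\ast\ast}$'s whose scalar coefficients involve $\tilde\sigma$. Hypothesis $(1)$ says that $\tilde\sigma$, viewed as a matrix on the upper and lower index pairs, is invertible with explicit inverse $i_\ast(\tilde\sigma)$, so multiplying by $i_\ast(\tilde\sigma)$ yields a normal-form identity expressing any product $(L^{-1})_{\ast\ast}(L^{-1})_{\ast\ast}$ as a linear combination of the same products with transposed indices. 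This is the key algebraic tool.

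With the inverse RLL relation in hand I would propose an ansatz of the shape
\[
y_{ab}=\sum_{c,d\in X}\bigl(1_L\otimes \alpha^{ab}_{cd}\bigr)(L^{-1})_{dc}+I_\sigma,
\]
where $\alpha^{ab}_{cd}\in L$ is an explicit polynomial in $i_\ast(\tilde\sigma)$, a single inverse family among $Q^{-1},(Q')^{-1}$, and appropriate $T_{\deg(\cdot)}^{\pm 1}$-twists; three analogous ansatz (one further involving $L_{dc}$, two pairing $(L^{-1})_{dc}$ with $(Q'')^{-1}$ and $(Q''')^{-1}$) supply the remaining rigidity data. The exact placement of each $Q$-family and each $T_{\deg(\cdot)}^{\pm 1}$ is dictated by the index structure of the particular rigidity equation it is targeted at, and Remark \ref{rem:rightbi:remark} lets us freely move $T_{\deg(\cdot)}$-twists between the upper and lower pairs of $\sigma^{bd}_{ac}$ through \eqref{eq:rigid:TT}.

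The main obstacle is the verification of the four identities. After substituting the ansatz, each computation follows the pattern: (i) move scalar tensors past $L^{\pm 1}$-generators via generator $(3)$ of $I_\sigma$, picking up $T_{\deg(\cdot)}^{\pm 1}$-twists; (ii) contract a pair of $L^{\pm 1}$-generators using the inverse RLL relation derived above; (iii) collapse $\tilde\sigma\cdot i_\ast(\tilde\sigma)$ (or the reversed product) to Kronecker deltas via hypothesis $(1)$; (iv) recognize the residual finite sum as one of $\sum_b Q_{ab}Q^{-1}_{bc}$, $\sum_b (Q')^{-1}_{bc}Q'_{ab}$, $\sum_b (Q'')^{-1}_{ab}Q''_{bc}$, or $\sum_b Q'''_{ab}(Q''')^{-1}_{bc}$, all of which equal $\delta_{ac}1_L$ by $(2)$--$(5)$. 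The delicate part is tracking the $T_{\deg(\cdot)}$-twists carefully enough that the surviving sum matches precisely the version of $Q$ for which an inverse has been assumed; this is exactly why four different twisted variants of $Q$ appear as separate hypotheses, rather than a single one.
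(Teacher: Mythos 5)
Your overall strategy coincides with the paper's: rigidity is established by exhibiting $x_{ab}$ and $y_{ab}$ explicitly as linear combinations of the generators with coefficients built from the $Q$-data of hypotheses $(2)$--$(5)$, and the verification runs through moving scalars past $L^{\pm1}$ via generator $(3)$, contracting via generator $(2)$ and the (conjugated) RLL relation, and collapsing $\tilde\sigma\cdot i_\ast(\tilde\sigma)$ via hypothesis $(1)$. (The paper itself only records the formulas and defers the computation to Theorem 4.1 of the cited earlier work, so at the level of detail your sketch is comparable.) However, the one concrete structural commitment you make is not consistent with formulas that actually work. The paper's elements are
\[
x_{ab}=\sum_{c,d\in X}(Q_{ac}\otimes Q_{db}^{-1})L_{cd}+I_\sigma
=\sum_{c,d\in X}({Q''}_{ac}^{-1}\otimes Q''_{db})L_{cd}+I_\sigma,
\qquad
y_{ab}=\sum_{c,d\in X}({Q'}_{db}^{-1}\otimes Q'_{ac})(L^{-1})_{cd}+I_\sigma ,
\]
whose coefficients are simple tensors occupying \emph{both} factors of $L\otimes_kL^{op}$. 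Your ansatz $y_{ab}=\sum_{c,d}(1_L\otimes\alpha^{ab}_{cd})(L^{-1})_{dc}+I_\sigma$ confines the coefficient to the second tensor factor; generator $(3)$ only lets you slide a first-factor scalar to the other side of $(L^{-1})_{cd}$ (picking up a $T_{\deg}$-twist), never into the second factor, so for noncommutative $L$ this shape cannot reproduce the required two-sided coefficients. Likewise you pair $(Q'')^{-1}$ with an $(L^{-1})$-built element, whereas $Q''$ belongs to the second expression for $x_{ab}$, which is built from the $L_{cd}$. Since the entire content of the theorem is that these four specific twisted contractions of $i_\ast(\tilde\sigma)$ are exactly the invertibility data needed, getting their placement wrong is not a cosmetic issue.

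A second gap: the definition of rigidity requires a \emph{single} $x_{ab}$ satisfying both $\sum_c((L^{-1})_{cb}+I_\sigma)x_{ac}=\delta_{ab}1$ and $\sum_c x_{cb}((L^{-1})_{ac}+I_\sigma)=\delta_{ab}1$ (and similarly one $y_{ab}$ for the two $L$-identities). Your plan produces four expressions, one per identity, but never addresses why the two candidates for $x_{ab}$ coincide. This is repairable --- once one expression is a left inverse and the other a right inverse of the same finite matrix over $A_\sigma$, the standard argument forces them to be equal and two-sided, which is presumably why the paper can assert the two displayed expressions for $x_{ab}$ (and for $y_{ab}$) agree --- but the step must be said. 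Finally, steps (i)--(iv) of your verification are only described, not carried out; since you yourself identify the bookkeeping of the $T_{\deg}$-twists as the delicate point that distinguishes the four hypotheses, the proof is not complete until at least one of the four identities is actually pushed through with the correct ansatz.
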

From the assumptions (1)--(5) in this theorem,
\begin{align*}
x_{ab}&=\sum_{c, d\in X}(Q_{ac}\otimes Q_{db}^{-1})L_{cd}+I_\sigma
\\
&=\sum_{c, d\in X}({Q''}_{ac}^{-1}\otimes Q''_{db})L_{cd}+I_\sigma,
\\
y_{ab}&=\sum_{c, d\in X}({Q'}_{db}^{-1}\otimes Q'_{ac})(L^{-1})_{cd}
+I_\sigma
\\
&=\sum_{c, d\in X}(Q'''_{bd}\otimes {Q'''}_{ca}^{-1})(L^{-1})_{cd}
+I_\sigma
\end{align*}
for any $a, b\in X$.

This theorem is proved in much the same way as 
Theorem 4.1 in
\cite{s2016}.
\section{Examples}
According to
the thesis \cite{otsuto},
this last section deals with examples of
$\sigma=(\sigma^{ab}_{cd})_{a, b, c, d\in X}$
satisfying 
\eqref{eq:rightbi:rhoT}
and
(1)--(5) in Theorem \ref{thm:suffcr:rigid}
(See also the paper \cite[Section 4]{s2016}).
Therefore, by means of $\sigma$,
we can construct the Hopf algebroid $A_\sigma$.

\begin{definition}
A non-empty set $QG$ with a binary operation is a quasigroup,
iff it satisfies:
\begin{enumerate}
\item
there uniquely exists the element $a\in QG$
such that $ab=c$
for any $b, c\in QG$;
\item
there uniquely exists the element $b\in QG$
such that $ab=c$
for any $a, c\in QG$.
\end{enumerate}
The unique element $a\in QG$ in the condition (1) will be denoted by
$c/b$,
and
the unique $b\in QG$ in (2) will be denoted by
$a\backslash c$.
\end{definition}
For example, any group is a quasigroup.
However, the quasigroup is not always associative.
\begin{example}
Let $QG_5=\{ 0, 1, 2, 3, 4\}$ denote the set of five elements,
together with the binary operation in Table 1
\cite[Section 4]{s2016}.
\begin{table}
\caption{The binary operation on $QG_5$}
\begin{center}
\begin{tabular}{c|ccccc}
\hline
&$0$&$1$&$2$&$3$&$4$\\
\hline
$0$&$4$&$3$&$2$&$1$&$0$\\
$1$&$3$&$1$&$0$&$2$&$4$\\
$2$&$0$&$2$&$3$&$4$&$1$\\
$3$&$1$&$0$&$4$&$3$&$2$\\
$4$&$2$&$4$&$1$&$0$&$3$\\
\hline
\end{tabular}
\end{center}
\end{table}
Here $02=2$.
Because each element of $QG_5$ appears 
once and only once in each row
and in each column of Table 1,
$QG_5$ with this binary operation is a quasigroup
\cite[Theorem I.1.3]{pflugfelder}.
This is not associative,
since
$(12)3=1\ne 4=1(23)$.
\end{example}

Let $QG$ be a finite quasigroup with at least two elements, 
$M$ a set isomorphic to the set $QG$,
and 
$\mu: M\times M\times M\to M$ a ternary operation on the set $M$.
We write $\pi: QG\to M$
for a bijection between the sets $QG$ and $M$.
The requirement on the ternary operation $\mu$ is the following:
\begin{enumerate}
\item[(QG1)]
$\mu(a, \mu(a, b, c), \mu(\mu(a, b, c), c, d))
=\mu(a, b, \mu(b, c, d))$
for any $a, b, c, d\in M$;
\item[(QG2)]
$\mu(\mu(a, b, c), c, d)=
\mu(\mu(a, b, \mu(b, c, d)), \mu(b, c, d), d)$
for any $a, b, c, d\in M$;
\item[(QG3)]
for any $b, c, d\in M$,
there uniquely exists $a\in M$ such that $\mu(a, b, c)=d$;
\item[(QG4)]
for any $a, c, d\in M$,
there uniquely exists $b\in M$ such that $\mu(a, b, c)=d$;
\item[(QG5)]
for any $a, b, d\in M$,
there uniquely exists $c\in M$ such that $\mu(a, b, c)=d$.
\end{enumerate}

For any finite quasigroup $QG$ satisfying $|QG|>1$
and
an abelian group $M$ isomorphic to the set $QG$,
the ternary operation
$\mu$ on $M$ defined by
$\mu(a, b, c)=a-b+c$
$(a, b, c\in M)$
enjoys all of the above conditions.

We set $H:=QG$
and $X:=QG$.
Let $G$ denote the opposite group 
of the symmetric group on the set $H$.
For $a\in QG$,
$\deg(a)\in G$ is given by
$\lambda\deg(a)=\lambda a$
$(\lambda\in H=QG)$.

Let $R$ be a $k$-algebra.
Here $k$ is a commutative ring.
We define the map
$\sigma^{ab}_{cd}: H\to R$
by
\[
\sigma^{ab}_{cd}(\lambda)=\begin{cases}
1_R,&\mbox{if }c=\pi^{-1}(\mu(\pi(\lambda), \pi(\lambda b),
\pi((\lambda b)a)))\backslash((\lambda b)a)
\\
&
\mbox{and }
d=\lambda\backslash\pi^{-1}(\mu(\pi(\lambda), \pi(\lambda b),
\pi((\lambda b)a)));
\\
0_R,&\mbox{otherwise}.
\end{cases}
\]
Let $L$ denote the $k$-algebra
consisting of maps from the set $H$ to $R$.
The product of this $k$-algebra $L$ is defined by
$(fg)(\lambda)=f(\lambda)g(\lambda)$
$(f, g\in L, \lambda\in H)$.
It follows that $L\cong R^{|H|}$
as $k$-algebras.
\begin{theorem}
This $\sigma=(\sigma^{ab}_{cd})_{a, b, c, d\in X}$
is rigid.
\end{theorem}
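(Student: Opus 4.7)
The plan is to realize $\sigma$ pointwise as the indicator of a dynamical Yang-Baxter-type bijection on $X\times X$, and then to reduce \eqref{eq:rigid:TT} together with conditions (1)--(5) of Theorem \ref{thm:suffcr:rigid} to combinatorial identities in the quasigroup $QG$ and the ternary operation $\mu$.

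First I would fix $\lambda\in H$ and unwind the definition of $\sigma^{ab}_{cd}(\lambda)$: writing $m_\lambda(a,b):=\pi^{-1}(\mu(\pi(\lambda),\pi(\lambda b),\pi((\lambda b)a)))$, the condition $\sigma^{ab}_{cd}(\lambda)=1_R$ is exactly $\lambda d=m_\lambda(a,b)$ and $m_\lambda(a,b)\cdot c=(\lambda b)a$. By the left and right division axioms of $QG$, $(c,d)$ is uniquely determined from $(\lambda,a,b)$, so we obtain a well-defined map $F_\lambda\colon X\times X\to X\times X$, $(a,b)\mapsto(c,d)$. Axioms (QG3)--(QG5) on $\mu$ provide the inverse procedure (given $(\lambda,c,d)$, recover $(a,b)$), so each $F_\lambda$ is a bijection and each $\sigma^{ab}_{cd}$ is a $\{0_R,1_R\}$-valued indicator.

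With the natural action $(T_{\deg(\alpha)}f)(\lambda)=f(\lambda\alpha)$ on $L$, the equation \eqref{eq:rigid:TT} evaluated at $\lambda$ asserts that two specific right-translated copies of the indicator $\sigma^{bd}_{ac}$ coincide; expanded via $F$ this is a symmetric rewriting of the defining equation for $m_{(\cdot)}(a,c)$, and can be checked by a direct substitution. Next I would define $i_\ast(\tilde\sigma)^{ab}_{cd}$ pointwise as the indicator of the inverse permutation of the tilde-twisted version of $F_\lambda$. Condition (1) then collapses to $F_\lambda\circ F_\lambda^{-1}=F_\lambda^{-1}\circ F_\lambda=\id$ pointwise. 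For (2)--(5), note that $L\cong R^{|H|}$, so an $X\times X$ matrix over $L$ is invertible iff it is invertible pointwise over $R$; for a $\{0_R,1_R\}$-valued matrix arising as a count of quasigroup solutions, invertibility over an arbitrary $R$ forces, and reduces to, being a permutation matrix. I would therefore verify case by case that $(Q_{ab}(\lambda))$, $(Q'_{ab}(\lambda))$, $(Q''_{ab}(\lambda))$, $(Q'''_{ab}(\lambda))$ are permutation matrices, using (QG3)--(QG5) for the untwisted sums and (QG1)--(QG2) for the $T_{\deg(\cdot)^{\pm 1}}$-twisted sums. Theorem \ref{thm:suffcr:rigid} then delivers rigidity.

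The main obstacle will be the permutation-matrix check for $Q''$ and $Q'''$, where the $T_{\deg(\cdot)^{\pm 1}}$ twist shifts the dynamical parameter inside $i_\ast(\tilde\sigma)$ and interferes with the sum over $u\in X$. This is precisely where axioms (QG1)--(QG2) on $\mu$, rather than the simple quasigroup division axioms, enter essentially: they encode the compatibility of $\mu$ with the translations needed, and guarantee that exactly one $u\in X$ contributes to each diagonal entry. The remaining bookkeeping is routine, and produces the explicit witnesses $x_{ab},y_{ab}$ listed after Theorem \ref{thm:suffcr:rigid}.
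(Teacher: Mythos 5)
Your proposal follows exactly the route the paper takes: it reduces the theorem to checking that $\sigma$ satisfies \eqref{eq:rightbi:rhoT} and conditions (1)--(5) of Theorem \ref{thm:suffcr:rigid}, and then invokes that theorem; the paper itself records no more detail than this, calling the verification straightforward. Your pointwise reading of $\sigma^{ab}_{cd}$ as the indicator of a bijection of $X\times X$ determined by the quasigroup divisions and (QG3)--(QG5), and the reduction of (1)--(5) to permutation-matrix checks over $L\cong R^{|H|}$, is a correct and reasonable way to carry out that verification.
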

The proof of this theorem is straightforward;
in fact, we can show that $\sigma$ satisfies
\eqref{eq:rightbi:rhoT}
and
(1)--(5) in Theorem \ref{thm:suffcr:rigid}.
\begin{corollary}
$A_\sigma$ is a Hopf algebroid.
\end{corollary}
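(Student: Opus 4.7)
The plan is to invoke Theorem \ref{thm:suffcr:rigid}, so the task is to verify \eqref{eq:rightbi:rhoT} together with the five sufficient conditions (1)--(5) for the specific $\sigma$ constructed from $(QG,\mu,\pi)$. Since each $\sigma^{ab}_{cd}$ is a $\{0_R,1_R\}$-valued function on $H$, and since (QG3)--(QG5) together with the left/right divisions $\backslash,/$ of the quasigroup guarantee that for each $(\lambda,a,b)\in H\times X\times X$ there is a unique pair $(c,d)\in X\times X$ with $\sigma^{ab}_{cd}(\lambda)=1_R$, every identity to be checked reduces to a pointwise set-theoretic statement in $H$.

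First I would spell out the concrete action: for $f\in L$ and $\lambda\in H$, $(T_{\deg(a)}f)(\lambda)$ becomes $f$ evaluated at a quasigroup translate of $\lambda$ determined by $\deg(a)$, while $(\rho_l(\sigma^{bd}_{ac})f)(\lambda)$ and $(\rho_r(\sigma^{bd}_{ac})f)(\lambda)$ are each either $0_R$ or a value of $f$, depending on whether the characteristic condition defining $\sigma^{bd}_{ac}(\lambda)=1_R$ is met. After unwinding the two sides of \eqref{eq:rightbi:rhoT} at an arbitrary $(\lambda,f)$, the resulting equation reduces to the identity (QG1) for $\mu$, read at the appropriate arguments obtained from $\lambda$, $a$, $b$, $c$, $d$ via the $QG$-multiplication.

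Next, to verify (1), I would define $i_\ast(\tilde\sigma)^{ab}_{cd}\in L$ as another $\{0_R,1_R\}$-valued characteristic function, constructed from the dual quasigroup divisions together with the $\mu$-identity in (QG2); by design, $i_\ast(\tilde\sigma)$ plays the role of an inverse matching. Unwrapping the two sums in (1) at a point $\lambda\in H$ reduces to checking that a specific composition of quasigroup operations is the identity on $X\times X$; this uses (QG1) and (QG2) in tandem with the uniqueness statements (QG3)--(QG5). For (2)--(5), each of $Q_{ab}$, $Q'_{ab}$, $Q''_{ab}$, $Q'''_{ab}$ reduces pointwise, after the sum over $u\in X$, to a $\{0_R,1_R\}$-valued expression describing a single quasigroup equation in one unknown; hence it is a permutation ``matrix'' indexed by $(a,b)\in X\times X$, and the required inverses are provided by the transpose permutations, whose entries again lie in $\{0_R,1_R\}$ and whose components are therefore well-defined elements of $L$.

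The main obstacle will be (1): organizing the double sum $\sum_{a,b\in X} i_\ast(\tilde\sigma)^{wa}_{zb}\,\tilde\sigma^{by}_{ax}$ at a fixed $\lambda$, and tracking how the nested quasigroup products, divisions and applications of $\mu$ collapse via (QG1) and (QG2) to produce exactly $\delta_{wx}\delta_{yz}1_L$, is the only step that requires extended bookkeeping. The verification of \eqref{eq:rightbi:rhoT} uses (QG1) alone, and (2)--(5) are essentially a restatement of the existence-and-uniqueness clauses (QG3)--(QG5); the bulk of the argument concentrates on (1).
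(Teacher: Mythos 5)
Your proposal follows the paper's route exactly: the paper likewise reduces the Corollary to checking that this $\sigma$ satisfies \eqref{eq:rightbi:rhoT} and conditions (1)--(5) of Theorem \ref{thm:suffcr:rigid} (a verification it declares ``straightforward'' and does not write out), whence $\sigma$ is rigid and the Hopf algebroid theorem of Section 5 applies. The only point to make explicit is that final citation --- rigidity of $\sigma$ together with \eqref{eq:rightbi:rhoT} yields the Hopf algebroid structure via that theorem --- since your write-up stops at rigidity, which is not yet the statement of the Corollary.
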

If $R$ is not separable with an idempotent Frobenius system
(For the definition, see Introduction),
then so is $L(\cong R^{|H|})$.
As a result,
$A_\sigma$ is not a weak Hopf algebra
on account of \cite[Theorem 5.1]{schauenburg2003}.
\section*{Acknowledgments}
The work was supported in part by JSPS KAKENHI Grant Number
JP17K05187.

\end{document}